\documentclass[11pt]{article}
\usepackage{graphicx}
\usepackage{amssymb}
\usepackage{amsmath}
\usepackage{amsthm}
\usepackage{amscd}
\usepackage{psfrag}
\usepackage{float} 
\usepackage{appendix}
\usepackage[all]{xy}
\usepackage{url}

\topmargin -1cm
\oddsidemargin 0cm
\evensidemargin 0cm
\textwidth 16.5cm
\textheight 23.5cm

\newtheorem{prop}{Proposition}[section] 
\newtheorem{algo}{Algorithm}[section] 

\newtheorem{remark}{Remark}[section]
\def\begrem{\begin{remark}\rm}
\def\endrem{\null\hfill\blackbox\end{remark}}

\def\be{\begin{equation}}
\def\ee{\end{equation}}

\newcommand{\dd}{\textnormal{d}}

\title{A particle micro-macro decomposition based numerical scheme for collisional kinetic equations in the diffusion scaling.}

\author{Ana\"is Crestetto\footnotemark[1] \and Nicolas Crouseilles\footnotemark[2] \and Mohammed Lemou\footnotemark[3]}

\begin{document}
\maketitle

\footnotetext[1]
{INRIA Rennes - Bretagne Atlantique, IPSO team \& Laboratoire de Math\'ematiques Jean Leray, CNRS UMR 6629, 
Universit\'e de Nantes, France. E-mail: \url{anais.crestetto@univ-nantes.fr}}

\footnotetext[2]
{INRIA Rennes - Bretagne Atlantique, IPSO team \& Institut de Recherche Math\'ematiques de Rennes, CNRS UMR 6625, Universit\'e de Rennes 1, France \& ENS Rennes. E-mail: \url{nicolas.crouseilles@inria.fr}}

\footnotetext[3]
{Institut de Recherche Math\'ematiques de Rennes, CNRS UMR 6625, Universit\'e de Rennes 1, France \& ENS Rennes \& INRIA Rennes - Bretagne Atlantique, IPSO team. E-mail: \url{mohammed.lemou@univ-rennes1.fr}}

\begin{abstract}
In this work, we derive particle schemes, based on micro-macro decomposition, for linear kinetic equations 
in the diffusion limit. Due to the particle approximation of the micro part, a splitting between the transport and the collision part  has to be performed, and the stiffness of both these two parts 
 prevent from uniform stability. To overcome this difficulty, the micro-macro system is reformulated into a continuous PDE 
 whose coefficients are no longer stiff, and  depend  on the time step $\Delta t$ in a consistent way. This non-stiff  reformulation of the micro-macro  system 
allows the use of  standard particle approximations for the transport part, and extends the work in \cite{ccl} where a particle approximation has  been applied using a micro-macro decomposition on kinetic equations in the fluid scaling. Beyond the so-called asymptotic-preserving property which is satisfied by our schemes, they significantly reduce the inherent noise of traditional particle methods, and 
they have a  computational cost which decreases as the system approaches the diffusion limit.  
\end{abstract}


\section{Introduction}
Particle systems appearing in plasma physics or radiative transfer can be described at different scales. When the system is far from its thermodynamical equilibrium, a kinetic description is necessary. Particles are then represented by a distribution function $f$ which depends on time $t\geq 0$, position $x\in\mathbb{R}^d$ and velocity $v\in V\subset\mathbb{R}^d$,  $d\geq 1$. The distribution $f\left(t,x,v\right)$ satisfies a collisional kinetic equation. Particle methods are often used for simulating kinetic problems, especially in realistic $3$-dimensional situations, $d=3$. However, they are affected by numerical noise due to their probabilistic character. A simple way to reduce  this noise is  to increase the number of particles, but then the numerical cost increases as well. Other standard kinetic descriptions, as phase space grid methods, may require too much memory in the two or three dimensional framework. Otherwise, macroscopic descriptions depending only on $t$ and $x$ can be sufficient if the system stays near its thermodynamical equilibrium, and are  less expensive since their unknown does not depend on the velocity variable anymore. Beside the noisy character of standard particle methods, there is  an additional difficulty in kinetic descriptions which is linked to the presence of various  scales in the system.  Multi-scale phenomena may indeed  appear in  plasma devices or radiative transfer applications, depending on some physical parameters as for example the mean free path of particles or the Knudsen number denoted here by $\varepsilon$. This multi-scale character is often represented by stiff terms in the kinetic equation, and the general challenge  is to construct  efficient numerical methods for these multiscale kinetic equations: this means that, without numerically resolving the stiffness,  the numerical method must solve accurately the  kinetic regime, must have the right asymptotics in the high-stiffness limit (the so-called asymptotic preserving property) and  its computational cost  should decrease as the system approaches the equilibrium (a time diminishing property). Note that direct numerical methods whose parameters  resolve the smallest scale of size $\varepsilon$ are impossible to use, since they automatically involve an extremely high computational cost.

Several strategies have been proposed to overcome this strong constraint. Domain decomposition methods can be applied when we have different regions with different values of the scaling parameter, see \cite{dd,gjl, tallec, tiwari}. When the different scales are less clearly delimited, we have to develop kinetic schemes that naturally reduce to good approximations of the macroscopic problem when the system goes near its equilibrium, and overcome the stiffness. Such schemes are often called Asymptotic-Preserving (AP), see \cite{jin,bt,klar,dp,cl,larsen,lemou-note,mlb,lm,jpt,np,buet}. Mainly,  the numerical cost remains comparable to the one of the non-stiff kinetic problem, even when $\varepsilon\ll 1$.

Our goal is to design an efficient AP scheme, {\em using particles}, for the following kinetic radiative transport equation (RTE) 
in the diffusion scaling 
\begin{equation}
\partial_t f +\frac{1}{\varepsilon} v\partial_x f  = \frac{1}{\varepsilon^2}(\rho M - f),  \;\; f(t=0, x, v)=f_0(x, v), 
\label{eq:etrbgk}
\end{equation}
where $x\in \Omega\subset\mathbb{R}$, $\rho(t, x)=\frac{1}{2}\int_V f(t, x, v) \dd v$, $V=[-1, 1]$, $M=1$ 
and $f_0(x, v)$ is a given initial condition. Periodic boundary conditions are considered. 
It is well-known (see \cite{larsen, dgp}) that when $\varepsilon$ goes to zero, the distribution function 
$f(t, x, v)$ converges towards $\bar{\rho}(t, x) M(v)$, where $\bar{\rho}$ satisfies the following diffusion equation 
\begin{equation}
\partial_t \bar{\rho} -\frac{1}{3} \partial_{xx} \bar{\rho}  = 0, \;\; \bar{\rho}(t=0, x) = \frac{1}{2}\int_V f_0(x, v) \dd v. 
\label{eq:diff}
\end{equation}

An extension to the Vlasov-Poisson-BGK case is presented in Section \ref{ssec:e}. 
The kinetic equation is coupled to a Poisson equation for the electric field denoted by $E(t, x)$. More precisely, we consider
\begin{eqnarray}
\partial_t f +\frac{1}{\varepsilon} v\partial_x f  + \frac{1}{\varepsilon}E\partial_v f= \frac{1}{\varepsilon^2}(\rho M - f), \label{eq:vlasovbgk} \\
\partial_x E=\rho-1\label{eq:vlasovbgkpoisson},
\end{eqnarray}
where $x\in \Omega\subset\mathbb{R}$, $\rho(t, x)=\int_V f(t, x, v) \dd v$, $V=\mathbb{R}$, 
$M\left(v\right)=\frac{1}{\sqrt{2\pi}}\exp\left(-\frac{v^2}{2}\right)$ is the absolute Maxwellian and we consider periodic boundary conditions. 
Note that an additional condition $\int_\Omega E \dd x=0$ is imposed to obtain a well-posed problem.
When $\varepsilon$ goes to zero, the asymptotic model is a drift-diffusion equation satisfied by $\bar{\rho}(t, x)$ (see \cite{bat})
\begin{equation}
\partial_t \bar{\rho} - \partial_x(\partial_x \bar{\rho}-\bar{E}\bar{\rho})  = 0, 
\;\; \bar{\rho}(t=0, x) =\int_\mathbb{R} f_0(x, v) \dd v,  
\label{eq:drift-diff}
\end{equation}
where $\bar{E}$ is linked to $\bar{\rho}$ by the Poisson equation $\partial_x\bar{E}=\bar{\rho}-1$.

The strategy will be the use of the micro-macro decomposition (see \cite{liu, lm,bennoune,cl}). 
It consists in writing the distribution function as the sum of the equilibrium part and a rest. 
One can then derive a system of two equations: a kinetic one for the rest $g(t, x, v)$ 
and a macroscopic one 
for the equilibrium $\rho(t, x) M(v)$. AP micro-macro schemes for (\ref{eq:etrbgk}) 
have been proposed in \cite{lm,bennoune,cl}. 
These schemes consist in a semi-implicit phase space grid method for the kinetic part, coupled to a classical 
spatial grid method for the macro part. Our strategy in this work follows the strategy of \cite{ccl} in the case of a fluid scaling: 
we use particles to sample the kinetic part whereas an Eulerian solver is used to discretize the macro unknown. 
The main motivation of this strategy lies in the fact that the micro part $g$ converges to zero 
when $\varepsilon$ goes to zero, so that a very few number of particles can sample it. 
As a consequence, in this regime, the cost of the global micro-macro solver is almost the same as the cost of 
an asymptotic solver for \eqref{eq:diff}. 

In this work, we focus on a diffusion type scaling (as in \eqref{eq:etrbgk} or \eqref{eq:vlasovbgk}) so that 
an additional scale is involved compared to the fluid scaling considered in \cite{ccl}. In \cite{lm, cl}, a diffusion 
scaling was studied, but using a fully grid based solver. Hence, the stiffest term (of order $1/\varepsilon^2$) 
is considered implicit in time in the micro equation, which enables to stabilize the transport term (of order $1/\varepsilon$) 
and then to derive an AP scheme for \eqref{eq:etrbgk} and \eqref{eq:vlasovbgk}. 
The use of particles for the micro part prevents from a similar strategy since a splitting between the transport 
term (of order $1/\varepsilon$) and the source term needs to be done. Then, a uniform stable scheme is hard 
to obtain in this context. To overcome this difficulty, a suitable formulation of the original model \eqref{eq:etrbgk} 
is performed so that the stiff transport term $(1/\varepsilon) \, v\partial_x g$ becomes 
$\varepsilon/\Delta t (1-e^{-\Delta t/\varepsilon^2}) v\partial_x g$. This reformulation is correct 
up to $\Delta t^2$ (for fixed $\varepsilon>0$) and has the good behavior when $\varepsilon$ goes to zero 
(for fixed $\Delta t>0$). This formulation is the starting point of the design of micro-macro-particle based 
numerical schemes which enjoy the AP property and for which the numerical cost diminishes 
as $\varepsilon$ goes to zero. This approach is extended to the second-order (in time) 
and to the Vlasov-Poisson-BGK case \eqref{eq:vlasovbgk}-\eqref{eq:vlasovbgkpoisson}. 



The sequel of the paper is organized as follows. In Section \ref{sec:1storder_00}, we recall the formal derivation 
of the asymptotic model of \eqref{eq:vlasovbgk} and \eqref{eq:etrbgk}. 
The first-order (in time) reformulation of \eqref{eq:etrbgk} is presented in Subsection \ref{sec:1storder} 
and its Lagrangian discretization in Subsection \ref{ssec:pic}. Its extension to a second-order in time model 
is detailed in Section \ref{sec:2ndorder_0}: the continuous model is presented in  Subsection \ref{sec:2ndorder} 
and its discretization is developed in Subsection \ref{subsec:ordre2_discr}. Section \ref{ssec:e} 
proposes an extension of our strategy to the Vlasov-Poisson-BGK system. Finally, Section \ref{sec:numres} is devoted to numerical simulations.

\section{Diffusion asymptotics}
\setcounter{equation}{0}
\label{sec:1storder_00}
In this section, we recall the main steps of the derivation of the model obtained from \eqref{eq:vlasovbgk} 
when $\varepsilon$ goes to zero. To do so, we consider the micro-macro decomposition (see \cite{lm,bennoune,liu}) 
of $f$: $f(t, x, v)=\rho(t, x) M(v) + g(t, x, v)$, with $\rho(t, x)=\langle f\rangle$, $M(v)$ is the Maxwellian equilibrium 
and the rest $g$ satisfies $\langle g\rangle=0$. Here $\langle f \rangle =\int_V f(v)\dd v$, with $V=\mathbb{R}$ and we also 
use the notation $\Pi f = \langle f\rangle M$. 
The following micro-macro model is equivalent to the original model \eqref{eq:vlasovbgk}
\begin{equation}
\label{eq:micromacro_initial_E}
\left\{ 
\begin{aligned} 
 \partial_t \rho &+ \frac{1}{\varepsilon}\partial_x \langle v g \rangle = 0,\\
 \partial_t g     &+  \frac{1}{\varepsilon}(I-\Pi) \left[v \partial_x (\rho M+g) + E\partial_v (\rho M+g) \right]= -\frac{1}{\varepsilon^2}g.  
\end{aligned} 
\right. 
\end{equation}
Since $\langle vM\rangle=0$ and $\partial_v M=-vM$, the micro equation can be rewritten as
\be
\label{micro}
 \partial_t g     +  \frac{1}{\varepsilon}\left[ vM\partial_x \rho - vME\rho  + (I-\Pi)(v\partial_x g +E\partial_v g) \right]= -\frac{1}{\varepsilon^2}g.  
\ee
When $\varepsilon$ goes to zero, one gets from \eqref{micro}: 
$g=-\varepsilon (vM\partial_x \rho - vME\rho) + {\cal O}(\varepsilon^2)$. Then the macro equation becomes 
$$
\partial_t \rho - \partial_x \left[ \langle v^2 M\rangle \partial_x \rho -\langle v^2 M\rangle E\rho\right] = {\cal O}(\varepsilon), 
$$
which gives, using $\langle v^2 M\rangle=1$ the following drift-diffusion equation satisfied by the limit $\bar{\rho}$ 
\begin{equation}
\partial_t \bar{\rho} - \partial_{xx} \bar{\rho} + \partial_x (\bar{E}\bar{\rho})=0, \;\; \partial_x \bar{E} = \bar{\rho}-1. 
\label{eq:ddlimit}
\end{equation}
The same calculations enables to derive the micro-macro model equivalent to \eqref{eq:etrbgk} 
\begin{equation}
\label{eq:micromacro_initial}
\left\{ 
\begin{aligned} 
 \partial_t \rho &+ \frac{1}{\varepsilon}\partial_x \langle v g \rangle = 0,\\
 \partial_t g     &+  \frac{1}{\varepsilon}(I-\Pi) \left[v \partial_x (\rho M+g)\right]= -\frac{1}{\varepsilon^2}g,   
\end{aligned} 
\right. 
\end{equation}
from which we derive the corresponding asymptotic model 
$$
\partial_t \bar{\rho} - \frac{1}{3}\partial_{xx} \bar{\rho}=0, 
$$
with $M=1$, $V=\left[-1,1\right]$ and $\langle f\rangle =\frac{1}{2}\int_V f \dd v$. 

\section{First-order in time reformulation and its discretization} 
\setcounter{equation}{0}
\label{sec:1storder_0}

In this part, a first-order reformulation of the micro part is proposed, which enables 
to avoid the stiff transport term in space. The strategy is presented in the case of the equation \eqref{eq:etrbgk} 
and its corresponding micro-macro model \eqref{eq:micromacro_initial}.  

\subsection{First-order in time reformulation}
\label{sec:1storder}
We start with (\ref{eq:etrbgk}) (with periodic boundary condition in space) and consider 
the micro-macro decomposition of $f=\rho + g$ (here $M(v)=1$ for all $v\in [-1, 1]$) and the micro-macro 
model \eqref{eq:micromacro_initial}. 

First, we rewrite the micro part of \eqref{eq:micromacro_initial} as 
\be
\label{eq:expg}
\partial_t (e^{t/\varepsilon^2} g) = - \frac{e^{t/\varepsilon^2} }{\varepsilon}\mathcal{F}\left(\rho,g\right),  
\ee
where $\mathcal{F}\left(\rho,g\right)$ is given by 
\be
\label{def:F}
\mathcal{F}\left(\rho,g\right) = v\partial_x \rho + v\partial_x g - \partial_x \langle vg\rangle. 
\ee
We denote $\Delta t>0$ the time step, $t^n=n\Delta t$ with $n\in\mathbb{N}$. 
Then, a second step consists in integrating \eqref{eq:expg} on $[t^n, t^{n+1}]$ to get 
$$
g(t^{n+1}) = e^{-\Delta t/\varepsilon^2} g(t^n) - \varepsilon(1-e^{-\Delta t/\varepsilon^2}) \mathcal{F}\left(\rho(t^n),g(t^n)\right) + {\cal O}(\Delta t^2).  
$$
To derive a continuous (in time) equation, we make appear a discrete time derivative on the left-hand side 
\be
\label{micro2}
\frac{g(t^{n+1}) -g(t^n)}{\Delta t}= \frac{e^{-\Delta t/\varepsilon^2} -1}{\Delta t}g(t^n) - \varepsilon \frac{1-e^{-\Delta t/\varepsilon^2}}{\Delta t} \mathcal{F}\left(\rho(t^n),g(t^n)\right) + {\cal O}(\Delta t^2),  
\ee
which can be rewritten, up to terms of order ${\cal O}(\Delta t^2)$, as 
\begin{equation*}
\partial_t g(t^n)= \frac{e^{-\Delta t/\varepsilon^2} -1}{\Delta t}g(t^n) - \varepsilon \frac{1-e^{-\Delta t/\varepsilon^2}}{\Delta t}\mathcal{F}\left(\rho(t^n),g(t^n)\right),~\forall n.   
\end{equation*}
We finally obtain the first-order reformulation of \eqref{eq:etrbgk} 
\begin{align}
 \partial_t \rho &+ \frac{1}{\varepsilon}\partial_x \langle v g \rangle = 0,\label{eq:mm_macro}\\
\partial_t g&= \frac{e^{-\Delta t/\varepsilon^2} -1}{\Delta t}g - \varepsilon \frac{1-e^{-\Delta t/\varepsilon^2}}{\Delta t}
\mathcal{F}\left(\rho,g\right), 
\label{eq:mm_micro}
\end{align}
with $\mathcal{F}\left(\rho,g\right)$ given by \eqref{def:F}. 
We remark that the micro equation does not contain any stiff term and then has a suitable form for a numerical 
discretization using particles. 
Moreover, this first-order reformulation satisfies the following properties.
\begin{itemize}
\item Consistency: For all fixed $\varepsilon >0$,  equation (\ref{eq:mm_micro}) is consistent with the initial micro equation (\ref{micro}) as $\Delta t$ goes to zero.
\item Asymptotic behaviour: For all fixed  $\Delta t >0$, as $\varepsilon$ goes to zero, we get from (\ref{eq:mm_micro}) $g=-\varepsilon v \partial_x \rho+ O(\varepsilon^2)$, which injected in the macro equation (\ref{eq:mm_macro}) provides the limit model 
\eqref{eq:diff}. 
\end{itemize}



\subsection{Lagrangian discretization}\label{ssec:pic}

This subsection is devoted to the derivation of an AP-particle based numerical scheme 
for  (\ref{eq:mm_macro})-(\ref{eq:mm_micro}).

\paragraph{Explicit in time discretization\\} 
We propose now a Lagrangian discretization of (\ref{eq:mm_micro}). 
More precisely, we adopt a particle method, see \cite{birdsall}, and consider a set of $N_p\in\mathbb{N}$ 
macro particles. The position of particle $k$, $1\leq k\leq N_p$, is denoted by 
$x_k(t)\in\Omega= \left[0,L_x\right]$, with $L_x>0$, its velocity by $v_k(t)\in V=\left[-1,1\right]$ and 
its weight by $\omega_k(t)\in\mathbb{R}$. Let $L_v=|V|=2$. 
The function $g$ is then assumed to be of the form 
\begin{equation}
g\left(t,x,v\right)=\sum_{k=1}^{N_p}\omega_k(t)\delta(x-x_k(t))\delta(v-v_k(t)),
\label{eq:diracmasses}
\end{equation}
where $\delta$ denotes the Dirac mass function. 
Weights $\omega_k(t)$ are related to the distribution function $g$ through
\begin{equation}\label{eq:poids}
\omega_k(t)=g(t,x_k(t),v_k(t))\frac{L_xL_v}{N_p}.
\end{equation}
Initially, particles are randomly distributed in the phase-space domain $\left[0,L_x\right]\times V$ 
and their weights are computed following (\ref{eq:poids}). 

The density $\rho$ is computed on a uniform spatial grid defined by $\mbox{x}_i=i\Delta x$, $i=0,\dots,N_x$, $N_x\in\mathbb{N}^\star$ and $\Delta x=L_x/N_x$. 
We denote by $\rho_i^n$ the approximation at time $t^n=n\Delta t$ and position $\mbox{x}_i$ of $\rho(t^n,\mbox{x}_i)$, 
with $\Delta t>0$ the time step. Moreover, $g^n(x,v)\approx g(t^n, x, v)$, $x_k^n\approx x_k(t^n)$, $v_k^n\approx v_k(t^n)$ 
and $w_k^n\approx w_k(t^n)$. 
Let us remark that $\dot{v}_k=0$, so that the velocities $v_k(t)$ are constant in time and we will note $v_k^n=v_k^0=:v_k$ for all $n$. 

Our goal is then to extend the particle discretization of \cite{ccl} to diffusion scaling. To that 
purpose, we exploit the reformulation (\ref{eq:mm_micro}). As already said in \cite{ccl}, we have 
to use a splitting procedure between the transport part and the source part. Then, the 
(first-order) splitting writes 
\begin{itemize}
\item start with an initial repartition of the $N_p$ particles $(x_k^0, v_k^0)$, 
with $\omega_k^0=g(t=0, x_k^0, v_k^0)L_x L_v/N_p$,
\item solve the transport part
\begin{equation*}
\partial_t g = \varepsilon \frac{1-e^{-\Delta t/\varepsilon^2}}{\Delta t}  v\partial_x g,
\end{equation*} 
with the (non stiff) characteristics 
\be
\label{carx}
\dot{x}_k =  \varepsilon \frac{1-e^{-\Delta t/\varepsilon^2}}{\Delta t}  v_k,
\ee
\item solve the source part
\begin{equation*}
\partial_t g =  \frac{e^{-\Delta t/\varepsilon^2} -1}{\Delta t}g- \varepsilon\frac{1-e^{-\Delta t/\varepsilon^2}}{\Delta t} \left[ v\partial_x \rho  - \partial_x \langle vg\rangle \right],
\end{equation*}  
using the equation satisfied by the weights 
\be
\label{weight}
\dot{\omega}_k = \frac{e^{-\Delta t/\varepsilon^2} -1}{\Delta t}\omega_k - \varepsilon \frac{1-e^{-\Delta t/\varepsilon^2}}{\Delta t} \left[ v_k (\partial_x \rho(x_k) - \partial_x \langle vg \rangle (x_k) \right]\frac{L_x L_v}{N_p}. 
\ee  
\end{itemize}

Now, we detail the time discretization of the two steps. First, (\ref{carx}) is approximated by 
a simple forward Euler scheme
\be
\label{carxd}
x_k^{n+1} = x_k^n + \varepsilon (1-e^{-\Delta t/\varepsilon^2})v_k. 
\ee
Second, we compute the last term in \eqref{weight}. The term $\langle v g \rangle$ is 
approximated on the spatial grid $\mbox{x}_i$ using 
\be
\label{momg}
\langle v g \rangle(\mbox{x}_i) \approx   \sum_{k=1}^{N_{p}} \omega^n_k B_\ell(\mbox{x}_i-x_k^{n+1}) v_k, 
\ee
where $B_\ell \geq 0$ is a B-spline function of order $\ell$: 
\begin{equation}
\label{bspline}
B_\ell(x)=(B_0 * B_{\ell -1})(x), \;\; \mbox{ with } \;\; 
B_0(x)=
\left\{
\begin{array}{llcc}
\frac{1}{\Delta x}  & \mbox{ if } |x|<\Delta x/2,  \\ 
0  & \mbox{ else}. 
\end{array}
\right.
\end{equation}
We then approximate the equation on the weights (\ref{weight}) using a first-order explicit integrator  
\be
\label{weightd}
\omega^{n+1}_k = e^{-\Delta t/\varepsilon^2} \omega_k^n - \varepsilon (1-e^{-\Delta t/\varepsilon^2}) \left[  \alpha_k^n + \beta_k^n\right],  
\ee
with 
\be
\label{poids_rhs}
\alpha_k^n= v_k \partial_x \rho^n(x_k^{n+1}) \frac{L_x L_v}{N_p}~~~\textrm{and}~~~\beta_k^n=- \partial_x \langle vg \rangle (x_k^{n+1}) \frac{L_x L_v}{N_p}.
\ee
To compute $\alpha_k^n$ (resp. $\beta_k^n$), since $\rho^n$ (resp. $\langle v g \rangle$) 
is known on the spatial grid, we 
approximate $\partial_x \rho^n$ (resp. $\partial_x\langle v g \rangle$) by centered 
finite differences and evaluate at $x_k^{n+1}$ using an interpolation with B-spline functions, for example
$$
\partial_x \rho^n(x_k^{n+1})\approx\sum_{i=1}^{N_x}\frac{\rho_{i+1}^n-\rho_{i-1}^n}{2\Delta x}B_\ell(\mbox{x}_i-x_k^{n+1}).
$$ 
Finally, the macro equation (\ref{eq:mm_macro}) is advanced through 
\begin{equation}
\rho_i^{n+1} = \rho_i^n -\frac{\Delta t}{\varepsilon} \frac{\langle vg^{n+1}\rangle_{i+1}-\langle vg^{n+1}\rangle_{i-1}}{2\Delta x}, 
\label{macrod0}
\end{equation}
where $\langle v g^{n+1}\rangle_i$ is computed using \eqref{momg}. 
Then, we have the following proposition.

\begin{prop}
The scheme given by (\ref{carxd})-(\ref{weightd})-(\ref{macrod0}) enjoys the AP property, \textit{i.e.} it satisfies the following properties 
\begin{itemize}
\item for fixed $\varepsilon>0$, the scheme is a first-order (in time) approximation of the original model (\ref{eq:etrbgk}),
\item for fixed $\Delta t>0$, the scheme degenerates into an explicit first-order (in time) scheme of \eqref{eq:diff}. 
\end{itemize}
\label{prop:lag_1st_exp}
\end{prop}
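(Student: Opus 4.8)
The proposition asserts two asymptotic consistencies of the fully discrete scheme \eqref{carxd}--\eqref{weightd}--\eqref{macrod0}, one for fixed $\varepsilon>0$ as $\Delta t\to 0$, and one for fixed $\Delta t>0$ as $\varepsilon\to 0$. Since the continuous reformulation \eqref{eq:mm_macro}--\eqref{eq:mm_micro} already enjoys both the consistency and the asymptotic behaviour stated at the end of Subsection \ref{sec:1storder}, the plan is to verify that the time discretization of that reformulation preserves these two limits, i.e. that the discrete scheme is a first-order-in-time approximation of the reformulated system and that the reformulation/discretization steps commute with the $\varepsilon\to 0$ limit.

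\textbf{First item (fixed $\varepsilon>0$).} I would fix $\varepsilon>0$ and Taylor-expand the exponential coefficients in the discrete update. From $e^{-\Delta t/\varepsilon^2}=1-\Delta t/\varepsilon^2+{\cal O}(\Delta t^2)$ one gets $\tfrac{e^{-\Delta t/\varepsilon^2}-1}{\Delta t}=-\tfrac{1}{\varepsilon^2}+{\cal O}(\Delta t)$ and $\varepsilon\,\tfrac{1-e^{-\Delta t/\varepsilon^2}}{\Delta t}=\tfrac{1}{\varepsilon}+{\cal O}(\Delta t)$. Substituting these into \eqref{weightd} and into the characteristic \eqref{carxd}, and combining the transport and source substeps of the Lie splitting (which itself introduces only an ${\cal O}(\Delta t)$ error per step for fixed, non-stiff coefficients), recovers a consistent forward-Euler-in-time discretization of the original micro equation \eqref{micro} together with \eqref{eq:mm_macro}. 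Coupled with the centered finite-difference/B-spline evaluations in \eqref{momg} and \eqref{poids_rhs}, which are consistent approximations of $\partial_x\rho$ and $\partial_x\langle vg\rangle$, this shows the scheme is a first-order-in-time approximation of \eqref{eq:etrbgk}. The argument is essentially the consistency statement for \eqref{eq:mm_micro} already granted in the excerpt, transported through the explicit integrator.

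\textbf{Second item (fixed $\Delta t>0$).} Here I would fix $\Delta t>0$ and let $\varepsilon\to 0$, so that $e^{-\Delta t/\varepsilon^2}\to 0$. The characteristic \eqref{carxd} becomes $x_k^{n+1}=x_k^n+\varepsilon v_k+{\cal O}(\varepsilon\,e^{-\Delta t/\varepsilon^2})\to x_k^n$, and the weight update \eqref{weightd} degenerates to $\omega_k^{n+1}=-\varepsilon\bigl[\alpha_k^n+\beta_k^n\bigr]+{\cal O}(e^{-\Delta t/\varepsilon^2})$, giving the discrete analogue of the closure $g=-\varepsilon v\partial_x\rho+{\cal O}(\varepsilon^2)$ and hence $\langle vg^{n+1}\rangle=-\tfrac{\varepsilon}{3}\partial_x\rho^n+{\cal O}(\varepsilon^2)$ after taking the $v$-moment against the B-spline reconstruction (using $\langle v^2\rangle=1/3$ on $V=[-1,1]$). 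Inserting this into the macro update \eqref{macrod0}, the factor $1/\varepsilon$ cancels the $\varepsilon$ in the flux, and one is left with
\begin{equation*}
\rho_i^{n+1}=\rho_i^n+\frac{\Delta t}{3}\,\frac{\rho_{i+1}^n-2\rho_i^n+\rho_{i-1}^n}{\Delta x^2}+{\cal O}(\varepsilon),
\end{equation*}
which is precisely an explicit first-order-in-time, centered-in-space scheme for the diffusion equation \eqref{eq:diff}.

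\textbf{Main obstacle.} The routine Taylor expansions are harmless; the delicate point is the $\varepsilon\to 0$ limit of the \emph{particle-to-grid} moment $\langle vg^{n+1}\rangle$ in \eqref{momg}. One must check that the B-spline projection of the empirical measure reproduces the correct second moment $\langle v^2\rangle\partial_x\rho$ so that the expected diffusion coefficient $1/3$ emerges, and that the discrete double derivative produced by composing the finite-difference evaluation of $\partial_x\rho^n$ at the (nearly unmoved) particle positions with the spatial differencing in \eqref{macrod0} genuinely assembles the centered Laplacian above rather than a spurious operator. Handling this cleanly requires tracking the $x_k^{n+1}=x_k^n+{\cal O}(\varepsilon)$ displacement inside the reconstruction and confirming the two finite-difference stencils compose consistently; I expect this bookkeeping, rather than any conceptual issue, to be where the real work lies.
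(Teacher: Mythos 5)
Your proposal is correct and follows essentially the same route as the paper's proof: the fixed-$\varepsilon$ consistency is dispatched as standard Taylor/splitting approximation, and the $\varepsilon\to 0$ limit is obtained by showing that the weight update degenerates to $\omega_k^{n+1}=-\varepsilon\alpha_k^n+{\cal O}(\varepsilon^2)$ (the $e^{-\Delta t/\varepsilon^2}\omega_k^n$ and $\varepsilon\beta_k^n$ contributions being negligible since $\omega_k^n={\cal O}(\varepsilon)$ for $n\geq 1$), whence $\langle v g^{n+1}\rangle_i\approx-\frac{\varepsilon}{3}\partial_x\rho_i^n$ and the macro update \eqref{macrod0} becomes an explicit scheme for \eqref{eq:diff}. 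One remark: the paper only claims a \emph{consistent} discretization of the diffusion equation in the limit, not the exact compact Laplacian stencil your display asserts, so the composition-of-stencils bookkeeping you flag as the main obstacle is not actually required for the proposition as stated.
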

\begin{proof}
The consistency follows directly from standard approximation. For the asymptotic behavior, 
when $\varepsilon$ goes to zero, we get 
$\omega_k^{n+1} = -\varepsilon \alpha_k^n+{\cal O}(\varepsilon^2)$ (since $\omega_k^n={\cal O}(\varepsilon)$ 
$\forall n\geq 1$). 
Computing the momentum of $g^{n+1}$ means that we use (\ref{momg}) with $g^{n+1}$, or in 
the limit regime 
\begin{eqnarray*}
\langle v g^{n+1} \rangle_i &\approx  & -\varepsilon \sum_{k=1}^{N_{p}} \alpha^n_k B_\ell(\mbox{x}_i-x_k^{n+1}) v_k^{n+1} + {\cal O}(\varepsilon^2),  \nonumber\\
&\approx & -\varepsilon\left[ \langle v^2 \rangle  \partial_x \rho^n \right]|_{x=\mbox{x}_i}  + {\cal O}(\varepsilon^2) \nonumber\\
&\approx &-\varepsilon  \frac{1}{3} \partial_x \rho_i^n + {\cal O}(\varepsilon^2).
\end{eqnarray*}
Injecting in the macro equation (\ref{eq:mm_macro}) then leads to a consistent discretization of \eqref{eq:diff}. 
\end{proof}
\begin{remark}[Preservation of the micro-macro structure.] As detailed in \cite{ccl}, we have to correct the particle' weights in order to preserve at the numerical level the micro-macro structure. Indeed, the micro-macro decomposition technique uses the zero-mean property $\langle g\rangle=0$. But nothing guarantees that this property is satisfied at the discrete level (on the weights $\omega_k$). That is why we have to correct the weights, by applying a discrete approximation of the operator $\left(I-\Pi\right)$ to each weight $\omega_k$, which is consistent with the continuous model.

We do not give the details of this correction (called \textit{projection step} in following algorithms) and refer the reader to \cite{ccl}. 
\end{remark}

\paragraph{Implicit time discretization\\} 
In this part, we want to make the previous scheme  (\ref{carxd})-(\ref{weightd})-(\ref{macrod0}) degenerate 
into an implicit time discretization of \eqref{eq:diff} (as done in \cite{lemou-note, cl}). 
To do so, we decompose (\ref{weightd}) into two parts so that the macro flux becomes 
\begin{eqnarray*}
\langle v g^{n+1} \rangle_i &=  &  - \varepsilon (1-e^{-\Delta t/\varepsilon^2}) \sum_{k=1}^{N_{p}} \alpha^n_k B_\ell(\mbox{x}_i-x_k^{n+1}) v_k +h_i^n,\nonumber
\end{eqnarray*}
with $B_\ell$ given by \eqref{bspline} and 
$$
h_i^n= e^{-\Delta t/\varepsilon^2}\sum_{k=1}^{N_{p}} \omega^n_k B_\ell(\mbox{x}_i-x_k^{n+1}) v_k 
-\varepsilon (1-e^{-\Delta t/\varepsilon^2}) \sum_{k=1}^{N_{p}} \beta^n_k B_\ell(\mbox{x}_i-x_k^{n+1}) v_k.
$$
Since $\alpha_k^n$ is the weight of $v\partial_x \rho^n$ (see \eqref{poids_rhs}) and $\langle v^2\rangle=1/3$, we can write  
\begin{eqnarray*}
\langle v g^{n+1} \rangle_i &\approx & - \varepsilon (1-e^{-\Delta t/\varepsilon^2}) \frac{1}{3}\partial_x \rho^n_i +h_i^n, 
\end{eqnarray*}
so that the macro scheme can be 
\begin{eqnarray}
\rho_i^{n+1} &=& \rho_i^n + \Delta t   (1-e^{-\Delta t/\varepsilon^2}) \frac{1}{3}\frac{ \rho^n_{i+1}-2\rho^n_i+\rho^n_{i-1}  }{\Delta x^2}-\frac{\Delta t}{\varepsilon}\frac{h_{i+1}^n-h_{i-1}^n}{2\Delta x}. 
\label{eq:lag_manoeuvre}
\end{eqnarray}
We can go further by considering now the diffusion term implicit in time to get 
\begin{eqnarray}
\rho_i^{n+1} &=& \rho_i^n + \Delta t   (1-e^{-\Delta t/\varepsilon^2}) \frac{1}{3}\frac{ \rho^{n+1}_{i+1}-2\rho^{n+1}_i+\rho^{n+1}_{i-1}  }{\Delta x^2}-\frac{\Delta t}{\varepsilon}\frac{h_{i+1}^n-h_{i-1}^n}{2\Delta x}. 
\label{macrod0_cn}
\end{eqnarray}
We can write the following proposition. 
\begin{prop}
The scheme given by (\ref{carxd})-(\ref{weightd})-(\ref{macrod0_cn}) enjoys the AP property, \textit{i.e.} it satisfies the following properties 
\begin{itemize}
\item for fixed $\varepsilon>0$, the scheme is a first-order (in time) approximation of the original model (\ref{eq:etrbgk}),
\item for fixed $\Delta t>0$, the scheme degenerates into an implicit first-order (in time) scheme of \eqref{eq:diff}. 
\end{itemize}
\label{prop:lag_1st_imp}
\end{prop}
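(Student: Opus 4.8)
The plan is to mirror the proof of Proposition \ref{prop:lag_1st_exp}, since the present scheme differs from it only in the macro update: the characteristics (\ref{carxd}) and the weight law (\ref{weightd}) are untouched, so their first-order consistency with the micro equation (\ref{micro}) and the $\varepsilon\to 0$ expansion $\omega_k^{n+1}=-\varepsilon\alpha_k^n+{\cal O}(\varepsilon^2)$ (hence $\langle vg^{n+1}\rangle_i=-\varepsilon\tfrac{1}{3}\partial_x\rho_i^n+{\cal O}(\varepsilon^2)$) carry over verbatim. The whole argument therefore reduces to analysing how (\ref{macrod0_cn}) behaves in the two regimes, and I would organise it as a consistency part (fixed $\varepsilon$) and an asymptotic part (fixed $\Delta t$).

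For consistency at fixed $\varepsilon>0$, I would first recall that (\ref{macrod0}) is a first-order (in time) discretization of the macro equation (\ref{eq:mm_macro}), as established in Proposition \ref{prop:lag_1st_exp}, and that (\ref{eq:lag_manoeuvre}) is obtained from it by the exact rearrangement of the $\alpha_k^n$-contribution together with the moment identity $\langle v^2\rangle=1/3$; hence (\ref{eq:lag_manoeuvre}) is still first-order consistent. It then remains to control the difference between (\ref{eq:lag_manoeuvre}) and (\ref{macrod0_cn}), which is exactly the implicit correction $\Delta t\,(1-e^{-\Delta t/\varepsilon^2})\tfrac{1}{3}\Delta x^{-2}[(\rho^{n+1}_{i+1}-\rho^{n}_{i+1})-2(\rho^{n+1}_{i}-\rho^{n}_{i})+(\rho^{n+1}_{i-1}-\rho^{n}_{i-1})]$. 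For fixed $\varepsilon$ one has $1-e^{-\Delta t/\varepsilon^2}={\cal O}(\Delta t)$ and $\rho^{n+1}_j-\rho^n_j={\cal O}(\Delta t)$, so on a fixed spatial mesh this correction is ${\cal O}(\Delta t^3)$, negligible against the ${\cal O}(\Delta t^2)$ truncation error, and first-order accuracy is preserved.

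For the asymptotic behaviour at fixed $\Delta t>0$, the key quantity to estimate is $h_i^n$. Using the inductive bound $\omega_k^n={\cal O}(\varepsilon)$ for $n\geq1$ (which also gives $\beta_k^n={\cal O}(\varepsilon)$ through the definition (\ref{poids_rhs}) of $\beta_k^n$ as a moment of $g$), together with the fact that $e^{-\Delta t/\varepsilon^2}$ is smaller than any power of $\varepsilon$ as $\varepsilon\to0$, both contributions in the definition of $h_i^n$ are ${\cal O}(\varepsilon^2)$; consequently the flux term $\tfrac{\Delta t}{\varepsilon}(h^n_{i+1}-h^n_{i-1})/(2\Delta x)={\cal O}(\varepsilon)$ vanishes in the limit. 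Since moreover $1-e^{-\Delta t/\varepsilon^2}\to1$, equation (\ref{macrod0_cn}) degenerates into $\rho_i^{n+1}=\rho_i^n+\tfrac{\Delta t}{3}\Delta x^{-2}(\rho^{n+1}_{i+1}-2\rho^{n+1}_i+\rho^{n+1}_{i-1})+{\cal O}(\varepsilon)$, i.e. the backward-Euler (implicit) first-order discretization of the diffusion equation (\ref{eq:diff}).

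The main obstacle I anticipate is precisely the estimate $h_i^n={\cal O}(\varepsilon^2)$: because $h_i^n$ enters the macro scheme multiplied by the singular factor $1/\varepsilon$, the naive bound $h_i^n={\cal O}(\varepsilon)$ would only yield an ${\cal O}(1)$ spurious flux and would destroy the limit. One therefore has to verify carefully that the $\beta_k^n$-part carries the prefactor $\varepsilon(1-e^{-\Delta t/\varepsilon^2})$ in front of a quantity that is itself ${\cal O}(\varepsilon)$, and that the $e^{-\Delta t/\varepsilon^2}\omega_k^n$-part is super-exponentially small; this is what upgrades $h_i^n$ to the order ${\cal O}(\varepsilon^2)$ needed for the $1/\varepsilon$-weighted flux to disappear, and it is the crucial point distinguishing the asymptotic estimate from the consistency estimate.
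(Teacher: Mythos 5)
You are correct, and your argument takes essentially the same route as the paper: Proposition \ref{prop:lag_1st_imp} is stated there without a formal proof, the justification being the derivation of \eqref{macrod0_cn} immediately above it combined with the proof already given for Proposition \ref{prop:lag_1st_exp}, and your two-part argument (the implicit correction being ${\cal O}(\Delta t^3)$ for fixed $\varepsilon$; the vanishing of the $\frac{\Delta t}{\varepsilon}$-weighted flux for fixed $\Delta t$) supplies exactly those missing details. In particular, your key estimate $h_i^n={\cal O}(\varepsilon^2)$, obtained from $\omega_k^n={\cal O}(\varepsilon)$, $\beta_k^n={\cal O}(\varepsilon)$ and the exponential smallness of $e^{-\Delta t/\varepsilon^2}$, is precisely the point the paper makes explicit only for the Eulerian analogue in Appendix \ref{subsec:eulerian}, where it observes that $h_{i+1/2,j}={\cal O}(\varepsilon^2)$ after two iterations ensures the asymptotic-preserving property.
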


\begin{remark}
Instead of an implicit scheme, it is possible to use a Crank-Nicolson method.
\end{remark}


The scheme is finally summarized in the following algorithm.
\begin{algo}~~ 
\begin{itemize}
\item  Initialize $(x_k^0, v_k^0)$, $\omega_k^0$ and $\rho_i^0$.

At each time step:
\item 1) Advance micro part: 
\begin{itemize}
\item advance the characteristics with (\ref{carxd}),
\item compute $\langle v g\rangle$ with (\ref{momg}),
\item advance the equation on the weights with (\ref{weightd}).
\end{itemize}
\item 2) Projection step: compute $(I-\Pi)g^{n+1}$ using \cite{ccl}.
\item 3) Advance macro part: 
\begin{itemize}
\item compute $\langle v g^{n+1}\rangle$ with (\ref{momg}),
\item compute $\rho^{n+1}$ with (\ref{macrod0_cn}).
\end{itemize}
\end{itemize}
\label{algo:lag_1st}
\end{algo}

\section{Second-order in time reformulation and its discretization}\label{sec:2ndorder_0}
\setcounter{equation}{0}

This section is devoted to the derivation of a second-order scheme for the micro-macro system (\ref{eq:micromacro_initial}). As for the first-order scheme, we will first reformulate 
the microscopic equation (\ref{micro}) in order to suppress stiff terms (see Subsection \ref{sec:1storder} 
for the first-order case), and then discretize the obtained micro-macro model to get an AP 
efficient numerical scheme (see Subsection \ref{ssec:pic} for the first-order case). 

\subsection{Second-order reformulation}\label{sec:2ndorder}
Let start from the following (equivalent) reformulation of the micro part of \eqref{eq:micromacro_initial} 
$$
\partial_t\left(e^{t/\varepsilon^2}g\right)=-\frac{e^{t/\varepsilon^2}}{\varepsilon}\mathcal{F}\left(\rho(t),g(t)\right),
$$
where $\mathcal{F}\left(\rho,g\right)$ is defined by \eqref{def:F}. 
We now integrate with respect to $t\in [t^n, t^{n+1}]$ and use a second-order mid-point quadrature 
$$
g(t^{n+1})=e^{-\Delta t/\varepsilon^2}g(t^{n})-\frac{\Delta te^{-\Delta t/2\varepsilon^2}}{\varepsilon}\mathcal{F}\left(\rho(t^{n+1/2}),g(t^{n+1/2})\right)+\mathcal{O}\left(\Delta t^3\right).
$$
To derive a continuous (in time) equation, we make appear a discrete time derivative on the left-hand side 
$$
\frac{g(t^{n+1})-g(t^{n})}{\Delta t}
=\frac{e^{-\Delta t/\varepsilon^2}-1}{\Delta t}g(t^{n})-\frac{e^{-\Delta t/2\varepsilon^2}}{\varepsilon}\mathcal{F}\left(\rho(t^{n+1/2}),g(t^{n+1/2})\right)+\mathcal{O}\left(\Delta t^2\right).
$$
We now look for a continuous (in time) equation for which the previous relation is a second numerical scheme.  
To do so, we perform Taylor expansions of the different terms  at $t^{n+1/2}$ 
\begin{equation}\label{eq:approx_o2_1}
\partial_t g(t^{n+1/2})
\!=\!\frac{e^{-\Delta t/\varepsilon^2}\!\!-\!1}{\Delta t} \!\! \left(g(t^{n+1/2})-\frac{\Delta t}{2}\partial_tg(t^{n+1/2})\right)
-\frac{e^{-\Delta t/2\varepsilon^2}}{\varepsilon}\mathcal{F}\!\left(\rho(t^{n+1/2}),g(t^{n+1/2})\right)
+\mathcal{O}\left(\Delta t^2\right)\!.
\end{equation}
Finally, the microscopic equation of \eqref{eq:micromacro_initial} is reformulated up to the second-order by
$$
\partial_t g=\frac{2}{\Delta t}\frac{e^{-\Delta t/\varepsilon^2}-1}{e^{-\Delta t/\varepsilon^2}+1}g-\frac{2}{\varepsilon}\frac{e^{-\Delta t/2\varepsilon^2}}{e^{-\Delta t/\varepsilon^2}+1}\mathcal{F}\left(\rho,g\right),
$$
and we can now consider the second-order reformulated micro-macro system
\begin{align}
 \partial_t \rho &+ \frac{1}{\varepsilon}\partial_x \langle v g \rangle = 0,\label{eq:mm_macro_2nd}\\
\partial_tg&=\frac{2}{\Delta t}\frac{e^{-\Delta t/\varepsilon^2}-1}{e^{-\Delta t/\varepsilon^2}+1}g-\frac{2}{\varepsilon}\frac{e^{-\Delta t/2\varepsilon^2}}{e^{-\Delta t/\varepsilon^2}+1} \left[ v\partial_x \rho + v\partial_x g - \partial_x \langle vg\rangle  \right].
\label{eq:mm_micro_2nd}
\end{align}

\subsection{Time discretization}\label{subsec:ordre2_discr}

We are now interested in the construction of an AP scheme for system (\ref{eq:mm_macro_2nd})-(\ref{eq:mm_micro_2nd}), based on a second-order Runge-Kutta (RK2) method for the time discretization and a Lagrangian method for the phase space discretization of the micro part. 


The RK2 is based on a prediction step on $\Delta t/2$ (first-order) and a correction step on $\Delta t$. 
Then, a second-order (in time) scheme for (\ref{eq:mm_macro_2nd})-(\ref{eq:mm_micro_2nd}) would read 
\begin{align}
\mbox{Prediction step on $\Delta t/2$}&\nonumber\\
\begin{split}
g^{n+1/2}=&g^n+\frac{e^{-\Delta t/\varepsilon^2}-1}{e^{-\Delta t/\varepsilon^2}+1}\; g^n-\frac{\Delta t}{\varepsilon}\frac{e^{-\Delta t/2\varepsilon^2}}{e^{-\Delta t/\varepsilon^2}+1} \mathcal{F}\left(\rho^n,g^n\right), 
\end{split}\label{gnpdemi_0}\\
\rho^{n+1/2} =&\rho^n- \frac{\Delta t}{2\varepsilon}\partial_x \langle v g^{n+1/2} \rangle,\label{rhonpdemi_0}\\
\mbox{Correction step on $\Delta t$}\hphantom{/2}&\nonumber\\
\begin{split}
g^{n+1}=&g^n+2\frac{e^{-\Delta t/\varepsilon^2}-1}{e^{-\Delta t/\varepsilon^2}+1}\; \widetilde{g}-\frac{2\Delta t}{\varepsilon}\frac{e^{-\Delta t/2\varepsilon^2}}{e^{-\Delta t/\varepsilon^2}+1} \mathcal{F}\left(\rho^{n+1/2},g^{n+1/2}\right), 
\end{split}\label{gnp1_0}\\
\rho^{n+1} =&\rho^n- \frac{\Delta t}{\varepsilon}\partial_x \langle v g^{n+1/2} \rangle.\label{rhonp1_0}
\end{align}
We still have to fix $\widetilde{g}$ in \eqref{gnpdemi_0} in order to get a second-order scheme and to 
ensure the convergence of $g^{n+1}$ to zero as $\varepsilon$ goes to zero. 
It turns out the choice $\widetilde{g}=\frac{g^n+g^{n+1}}{2}$ ensures the two conditions. 
Indeed, we get for the correction step of the micro part 
$$
g^{n+1}=e^{-\Delta t/\varepsilon^2}g^n-\frac{\Delta t}{\varepsilon}e^{-\Delta t/2\varepsilon^2} \mathcal{F}\left(\rho^{n+1/2},g^{n+1/2}\right).  
$$



Up to now, the micro part converges exponentially fast to zero (when $\varepsilon$ goes to zero), 
so that the asymptotic behavior of the scheme is $\rho^{n+1} = \rho^n$. Hence, the last but not the least 
step consists in modifying the macro part to capture the correct asymptotic limit. 

As done in Section \ref{sec:1storder_0}, we will modify the discretization in order to make 
appear the diffusion term directly in the macro part \eqref{rhonp1_0}. However, the modification 
has to be of order $\Delta t^3$ for a fixed $\varepsilon>0$ to not spoil the second-order accuracy of the scheme. 
The correction we propose consists in computing $\rho_i^{n+1}$ using a Crank-Nicolson 
scheme for the diffusion weighted which is of order ${\cal O}(\Delta t^3)$ for fixed $\varepsilon>0$ 
and which degenerates to $\Delta t$ when $\varepsilon$ goes to zero 
\begin{equation}
\rho_i^{n+1}=\rho_i^n- \frac{\Delta t}{\varepsilon}\partial_x \langle v g^{n+1/2} \rangle_i+\Delta t(1-e^{-\Delta t/\varepsilon^2})^2\frac{1}{3}\partial_{xx}\left(\frac{\rho_i^{n+1}+\rho_i^n}{2}\right).
\label{eq:rho_ordre2_man}
\end{equation}


\subsection{Lagrangian discretization}\label{ssec:pic_o2}
We consider the same notations as in Subsection \ref{ssec:pic} 
and detail here the Lagrangian discretization of the micro-macro system (\ref{eq:mm_macro_2nd})-(\ref{eq:mm_micro_2nd}). In the prediction step \eqref{gnpdemi_0}, we compute $x_k^{n+1/2}$ with 
a forward Euler integrator 
\begin{equation}
x_k^{n+1/2}=x_k^n+\frac{\Delta t}{\varepsilon}\frac{e^{-\Delta t/2\varepsilon^2}}{e^{-\Delta t/\varepsilon^2}+1}v_k,
\label{eq:cara_pred}
\end{equation}
and advance the weights with
\begin{equation}
\begin{split}
w_k^{n+1/2}=\frac{2e^{-\Delta t/\varepsilon^2}}{e^{-\Delta t/\varepsilon^2}+1}w_k^n-\frac{\Delta t}{\varepsilon}\frac{e^{-\Delta t/2\varepsilon^2}}{e^{-\Delta t/\varepsilon^2}+1}\left[ v_k \partial_x \rho^n(x_k^n)- \partial_x \langle v_k g^n(x_k^n)\rangle  \right]\frac{L_xL_v}{N_p}. 
\end{split}\label{eq:poids_pred}
\end{equation}
We end this prediction step by computing the flux $\langle v g^{n+1/2}\rangle$ with \eqref{momg} to get the density 
\begin{equation}
\rho_i^{n+1/2}=\rho_i^n- \frac{\Delta t}{2\varepsilon}\partial_x \langle v g^{n+1/2} \rangle_i.
\label{eq:rho_pred}
\end{equation}
Now in the correction step, we compute the position at $t^{n+1}$ with
\begin{equation}
x_k^{n+1}=x_k^n+\frac{\Delta t}{\varepsilon}e^{-\Delta t/2\varepsilon^2}v_k, 
\label{eq:cara_corr}
\end{equation}
then the weights are given by 
\begin{equation}
w_k^{n+1}=e^{-\Delta t/\varepsilon^2}w_k^n-\frac{\Delta t}{\varepsilon}e^{-\Delta t/2\varepsilon^2}\left[ v_k \partial_x \rho^{n+1/2}(x_k^{n+1/2}) - \partial_x \langle v_kg^{n+1/2}(x_k^{n+1/2})\rangle \right]\frac{L_xL_v}{N_p}.
\label{eq:poids_corr}
\end{equation}
Now, using \eqref{eq:rho_ordre2_man} in the last step, we have  
\begin{equation}
\rho_i^{n+1}=\rho_i^n- \frac{\Delta t}{\varepsilon}\partial_x \langle v g^{n+1/2} \rangle_i+\Delta t(1-e^{-\Delta t/\varepsilon^2})^2\frac{1}{3}\partial_{xx}\left(\frac{\rho_i^{n+1}+\rho_i^n}{2}\right), 
\label{eq:rho_corr}
\end{equation}
where $\langle v g^{n+1/2} \rangle_i$ is computed using \eqref{momg}. 

We finally have the following result.
\begin{prop}
The scheme given by \eqref{eq:cara_pred}-\eqref{eq:poids_pred}-\eqref{eq:rho_pred}-\eqref{eq:cara_corr}-\eqref{eq:poids_corr}-\eqref{eq:rho_corr} enjoys the AP property, \textit{i.e.} it satisfies the following properties 
\begin{itemize}
\item for fixed $\varepsilon>0$, the scheme is a second-order (in time) approximation of the original model (\ref{eq:etrbgk}), 
\item for fixed $\Delta t>0$, the scheme degenerates into an implicit second-order (in time) scheme of \eqref{eq:diff}. 
\end{itemize}
\label{prop:lag_2st}
\end{prop}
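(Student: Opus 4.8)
The plan is to follow exactly the two-regime structure used for Proposition~\ref{prop:lag_1st_exp}, treating the consistency ($\varepsilon$ fixed) and the asymptotic ($\Delta t$ fixed, $\varepsilon\to 0$) statements separately, and to lean on the fact that the reformulated system \eqref{eq:mm_macro_2nd}--\eqref{eq:mm_micro_2nd} was constructed in Subsection~\ref{sec:2ndorder} precisely so as to agree with the original micro-macro model \eqref{eq:micromacro_initial} (hence with \eqref{eq:etrbgk}) up to $\mathcal{O}(\Delta t^2)$ for fixed $\varepsilon$. It then remains to check that the RK2 prediction-correction discretization, the particle sampling and the added macro correction term respect the claimed orders and the claimed limit.

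For the consistency part, I would first verify that \eqref{eq:cara_pred}--\eqref{eq:poids_corr} form a genuine second-order integrator of \eqref{eq:mm_micro_2nd}. The key algebraic point, already isolated just before \eqref{eq:rho_ordre2_man}, is that the choice $\widetilde g=(g^n+g^{n+1})/2$ collapses the correction step into $g^{n+1}=e^{-\Delta t/\varepsilon^2}g^n-(\Delta t/\varepsilon)\,e^{-\Delta t/2\varepsilon^2}\mathcal{F}(\rho^{n+1/2},g^{n+1/2})$, which is exactly the mid-point quadrature of the exact relation $\partial_t(e^{t/\varepsilon^2}g)=-(e^{t/\varepsilon^2}/\varepsilon)\mathcal{F}$, and is therefore locally accurate to $\mathcal{O}(\Delta t^3)$. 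I would then Taylor expand the prediction density \eqref{eq:rho_pred} around $t^{n+1/2}$ to confirm that $\langle v g^{n+1/2}\rangle$ is a second-order approximation of the macroscopic flux, so that the mid-point macro update $\rho^{n+1}=\rho^n-(\Delta t/\varepsilon)\partial_x\langle v g^{n+1/2}\rangle$ is itself second-order. Finally one must check that the extra diffusion term in \eqref{eq:rho_corr} is harmless: since $1-e^{-\Delta t/\varepsilon^2}=\Delta t/\varepsilon^2+\mathcal{O}(\Delta t^2)$ for fixed $\varepsilon$, the factor $(1-e^{-\Delta t/\varepsilon^2})^2$ is $\mathcal{O}(\Delta t^2)$, whence $\Delta t(1-e^{-\Delta t/\varepsilon^2})^2\,\partial_{xx}(\cdot)=\mathcal{O}(\Delta t^3)$ is a second-order perturbation. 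Consistency of the particle representation \eqref{eq:diracmasses}, of the B-spline reconstruction \eqref{momg} and of the centered differences, together with the projection step, is standard and carried over verbatim from Subsection~\ref{ssec:pic} and from \cite{ccl}.

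For the asymptotic part, the plan is to exploit the exponential smallness of the coefficients. As $\varepsilon\to 0$, both $e^{-\Delta t/\varepsilon^2}$ and $(\Delta t/\varepsilon)\,e^{-\Delta t/2\varepsilon^2}$ tend to zero, so from \eqref{eq:poids_pred} and \eqref{eq:poids_corr} an induction shows that $w_k^{n+1/2}$ and $w_k^{n+1}$ vanish exponentially fast. Consequently the reconstructed flux $\langle v g^{n+1/2}\rangle_i$ obtained through \eqref{momg}, and even $\varepsilon^{-1}\langle v g^{n+1/2}\rangle_i$, tend to zero, while at the same time $(1-e^{-\Delta t/\varepsilon^2})^2\to 1$. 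Passing to the limit in \eqref{eq:rho_corr} then leaves $\rho_i^{n+1}=\rho_i^n+(\Delta t/3)\,\partial_{xx}\big((\rho_i^{n+1}+\rho_i^n)/2\big)$, which is precisely the Crank-Nicolson (hence implicit and second-order in time) discretization of the diffusion equation \eqref{eq:diff}, as claimed.

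The step I expect to be the most delicate is the second-order consistency bookkeeping in the first bullet: one has to track simultaneously the $\mathcal{O}(\Delta t^2)$ modeling error from the reformulation, the local $\mathcal{O}(\Delta t^3)$ error of the mid-point/RK2 integrator, and the $\mathcal{O}(\Delta t^3)$ contribution of the added macro term, and to confirm that none of the $\varepsilon$-dependent coefficients degrades these orders for fixed $\varepsilon$. By contrast the $\varepsilon\to 0$ limit is comparatively direct, the exponential factors dominating the algebraic $1/\varepsilon$ singularities.
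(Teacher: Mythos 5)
Your proposal is correct and follows essentially the same route as the paper: the paper's own proof consists only of your second part (the weights $w_k^{n+1/2}\to 0$ exponentially fast as $\varepsilon\to 0$, hence $\langle v g^{n+1/2}\rangle_i\to 0$, and \eqref{eq:rho_corr} collapses to the Crank--Nicolson scheme $\rho_i^{n+1}=\rho_i^n+\frac{\Delta t}{3}\partial_{xx}\bigl(\frac{\rho_i^{n+1}+\rho_i^n}{2}\bigr)$ for \eqref{eq:diff}). Your consistency bookkeeping for fixed $\varepsilon$ (mid-point quadrature exactness of the correction step with $\widetilde g=(g^n+g^{n+1})/2$, and the $\mathcal{O}(\Delta t^3)$ size of the added macro term) is not repeated inside the paper's proof but is exactly the argument the paper gives in the construction of Subsections \ref{sec:2ndorder}--\ref{subsec:ordre2_discr}, so you are in full agreement with the authors.
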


\begin{proof}
When $\varepsilon\to 0$, we get from (\ref{eq:poids_pred}) $w_k^{n+1/2}\to 0$ exponentially fast and then $\langle v g^{n+1/2} \rangle_i\to 0$. By injecting it in the macro equation (\ref{eq:rho_corr}), we have at the limit $\rho_i^{n+1}=\rho_i^n+\frac{\Delta t}{3}\partial_{xx}\left(\frac{\rho_i^{n+1}+\rho_i^n}{2}\right)$, which is a Crank-Nicolson discretization of the diffusion equation \eqref{eq:diff}.
\end{proof}

\begin{remark}
Let us emphasize that the moments $\langle \cdot \rangle$ have to be computed with B-spline functions of order $\ell\geq 1$ in order to obtain a second-order in time scheme. Taking $\ell=0$ would lead to space discontinuities preventing the time scheme to be of second-order.
\end{remark}

\begin{remark}
The projection step being by construction a first-order step, we do it in the prediction step.
\end{remark}

The scheme is finally summarized in the following algorithm.
\begin{algo}~~
\begin{itemize}
\item  Initialization of $(x_k^0, v_k^0)$, $\omega_k^0$ and $\rho_i^0$.

At each time step:

\textbf{Prediction step: from $t^n$ to $t^{n+1/2}$.}
\item 1) Advance micro part: 
\begin{itemize}
\item advance the characteristics with (\ref{eq:cara_pred}), 
\item compute $\langle v g\rangle$ with (\ref{momg}) and B-spline functions of order $\ell\geq 1$, 
\item advance the equation on the weights with (\ref{eq:poids_pred}).
\end{itemize}
\item 2) Projection step: compute $(I-\Pi)g^{n+1/2}$ using \cite{ccl}.
\item 3) Advance macro part: 
\begin{itemize}
\item compute $\langle v g^{n+1/2}\rangle$ with (\ref{momg}) and B-spline functions of order $\ell\geq 1$, 
\item compute the density with (\ref{eq:rho_pred}).
\end{itemize}

\textbf{Correction step: from $t^n$ to $t^{n+1}$.}
\item 4) Advance micro part: 
\begin{itemize}
\item advance the characteristics with (\ref{eq:cara_corr}), 
\item compute $\langle v g\rangle$ with (\ref{momg}) and B-spline functions of order $\ell\geq 1$,
\item advance the equation on the weights with (\ref{eq:poids_corr}).
\end{itemize}
\item 5) Advance macro part with (\ref{eq:rho_corr}).
\end{itemize}
\label{algo:2ndorder}
\end{algo}

\section{Extension to the Vlasov-Poisson-BGK case}\label{ssec:e}
\setcounter{equation}{0}

This section is devoted to the extension of our method to kinetic equation making appear an electric field in the velocity direction. We consider the Vlasov-Poisson-BGK system in the diffusion scaling
\begin{eqnarray}
\partial_t f +\frac{1}{\varepsilon} v\partial_x f  + \frac{1}{\varepsilon}E\partial_v f= \frac{1}{\varepsilon^2}(\rho M - f), \label{eq:vlasovbgk_2} \\
\partial_xE=\rho-1,\label{eq:poisson}\\
\int_\Omega E\dd x=0,~\forall t\geq 0,\label{eq:zeromean}
\end{eqnarray}
where $x\in \Omega=\left[0,L_x\right]\subset\mathbb{R}$, $\rho(t,x)=\int_\mathbb{R} f(t,x,v) \dd v$ and $M\left(v\right)=\frac{1}{\sqrt{2\pi}}\exp\left(-\frac{v^2}{2}\right)$ is the absolute Maxwellian. 
Let $f_0\left(x,v\right)=f\left(t=0,x,v\right)$ the initial distribution function and let consider periodic boundary conditions in $x$: $f\left(t,0,v\right)=f\left(t,L_x,v\right)$, $\forall~v\in V$, $E\left(t,0\right)=E\left(t,L_x\right)$ $\forall t\geq 0$. 

We can extend our schemes to this problem by adapting the computations of Subsections \ref{sec:2ndorder} and \ref{subsec:ordre2_discr}. We do not give all the details of the computations but insist on difficulties coming from the electric field term and write the resulting schemes.

The second-order reformulated micro-macro system corresponding to (\ref{eq:vlasovbgk}) is
\begin{align}
 \partial_t \rho &+ \frac{1}{\varepsilon}\partial_x \langle v g \rangle = 0,\label{eq:mm_macro_2nd_bis}\\
\partial_tg&=\frac{2}{\Delta t}\frac{e^{-\Delta t/\varepsilon^2}-1}{e^{-\Delta t/\varepsilon^2}+1}g-\frac{2}{\varepsilon}\frac{e^{-\Delta t/2\varepsilon^2}}{e^{-\Delta t/\varepsilon^2}+1} \left[ vM\partial_x \rho + v\partial_x g - \partial_x \langle vg\rangle M -vME\rho +E\partial_v g \right].
\label{eq:mm_micro_2nd_bis}
\end{align}
The limit model is here the drift-diffusion equation coupled to Poisson equation (\ref{eq:ddlimit}).

Equipped with this system (\ref{eq:mm_macro_2nd_bis})-(\ref{eq:mm_micro_2nd_bis}), we construct the following Lagrangian, second-order in time scheme for (\ref{eq:vlasovbgk_2})-(\ref{eq:poisson})-(\ref{eq:zeromean}).
In the prediction step, characteristics are solved through
\begin{equation}
x_k^{n+1/2}=x_k^n+\frac{\Delta t}{\varepsilon}\frac{e^{-\Delta t/2\varepsilon^2}}{e^{-\Delta t/\varepsilon^2}+1}v_k^n,~~~v_k^{n+1/2}=v_k^n+\frac{\Delta t}{\varepsilon}\frac{e^{-\Delta t/2\varepsilon^2}}{e^{-\Delta t/\varepsilon^2}+1}E^n(x_k^n),
\label{eq:characwithE}
\end{equation}
the weights evolve with
\begin{equation}
\begin{split}
w_k^{n+1/2}=\frac{2e^{-\Delta t/\varepsilon^2}}{e^{-\Delta t/\varepsilon^2}+1}w_k^n-\frac{\Delta t}{\varepsilon}\frac{e^{-\Delta t/2\varepsilon^2}}{e^{-\Delta t/\varepsilon^2}+1}\left[ v_k^nM(v_k^n)\partial_x \rho^n(x_k^n)- \partial_x \langle v_k^ng^n(x_k^n)\rangle M(v_k^n)\right.  \\
\left.-v_k^nM(v_k^n)E^n(x_k^n)\rho^n(x_k^n)  \right]\frac{L_xL_v}{N_p}
\end{split}
\label{eq:poidswithE}
\end{equation}
and the macro equation is advanced with
\begin{equation}
\rho_i^{n+1/2}=\rho_i^n- \frac{\Delta t}{2\varepsilon}\partial_x \langle v g^{n+1/2} \rangle_i+\frac{\Delta t}{2}(1-e^{-\Delta t/\varepsilon^2})\partial_{x}\left(\partial_x\left(\frac{\rho_i^{n+1/2}+\rho_i^n}{2}\right)-E_i^{n}\rho_i^{n}\right).
\label{eq:rhowithE}
\end{equation}

In the correction step, characteristics are solved through
\begin{equation}
x_k^{n+1}=x_k^n+\frac{2\Delta t}{\varepsilon}\frac{e^{-\Delta t/2\varepsilon^2}}{e^{-\Delta t/\varepsilon^2}+1}v_k^{n+1/2},~~~v_k^{n+1}=v_k^n+\frac{2\Delta t}{\varepsilon}\frac{e^{-\Delta t/2\varepsilon^2}}{e^{-\Delta t/\varepsilon^2}+1}E^{n+1/2}(x_k^{n+1/2}),
\label{eq:characwithE2}
\end{equation}
the weights evolve with
\begin{equation}
\begin{split}
w_k^{n+1}=e^{-\Delta t/\varepsilon^2}w_k^n-\frac{\Delta t}{\varepsilon}e^{-\Delta t/2\varepsilon^2}\left[ v_k^{n+1/2}M(v_k^{n+1/2})\partial_x \rho^{n+1/2}(x_k^{n+1/2})~~~~~~~~~~~~~~~~~~~~~~~~~~~~~~\right.  \\
\left.- \partial_x \langle v_k^{n+1/2}g^{n+1/2}(x_k^{n+1/2})\rangle M(v_k^{n+1/2})~~~~~~~~~~~~~~~~~~~~~~~~~\right.  \\
\left.-v_k^{n+1/2}M(v_k^{n+1/2})E^{n+1/2}(x_k^{n+1/2})\rho^{n+1/2}(x_k^{n+1/2})  \right]\frac{L_xL_v}{N_p}.
\end{split}
\label{eq:poidswithE2}
\end{equation}
and the macro equation is advanced with
\begin{equation}
\rho_i^{n+1}=\rho_i^n- \frac{\Delta t}{\varepsilon}\partial_x \langle v g^{n+1/2} \rangle_i+\Delta t(1-e^{-\Delta t/\varepsilon^2})^2\partial_{x}\left(\partial_x\left(\frac{\rho_i^{n+1}+\rho_i^n}{2}\right)-E_i^{n+1/2}\rho_i^{n+1/2}\right).
\label{eq:vp_lag_2nd_man}
\end{equation}

The limit has been directly written in the macroscopic equation and the diffusion term is managed by a Crank-Nicolson method, in the prediction as well as in the correction step.

We have the following proposition.
\begin{prop}
The scheme given by \eqref{eq:characwithE}-\eqref{eq:poidswithE}-\eqref{eq:rhowithE}-\eqref{eq:characwithE2}-\eqref{eq:poidswithE2}-\eqref{eq:vp_lag_2nd_man} enjoys the AP property, \textit{i.e.} it satisfies the following properties 
\begin{itemize}
\item for fixed $\varepsilon>0$, the scheme is a second-order (in time) approximation of the original model (\ref{eq:vlasovbgk}),
\item for fixed $\Delta t>0$, the scheme degenerates into an implicit second-order (in time) scheme of \eqref{eq:drift-diff}. 
\end{itemize}
\end{prop}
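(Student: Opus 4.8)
The plan is to reproduce the two-part structure of the proof of Proposition \ref{prop:lag_2st}, supplementing it with the extra verifications forced by the electric field and the Poisson coupling. For the consistency statement (fixed $\varepsilon>0$), I would first check that the reformulated system \eqref{eq:mm_macro_2nd_bis}-\eqref{eq:mm_micro_2nd_bis} is a second-order-in-$\Delta t$ reformulation of the micro-macro model equivalent to \eqref{eq:vlasovbgk}: this is exactly the Duhamel representation and mid-point quadrature carried out in Subsection \ref{sec:2ndorder}, the only difference being that the flux $\mathcal{F}$ now carries the extra terms $-vME\rho+E\partial_v g$, which the Taylor expansions at $t^{n+1/2}$ handle in the same way. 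Then I would argue that the RK2 prediction--correction discretization \eqref{eq:characwithE}-\eqref{eq:poidswithE}-\eqref{eq:rhowithE}-\eqref{eq:characwithE2}-\eqref{eq:poidswithE2}-\eqref{eq:vp_lag_2nd_man}, together with the B-spline moment reconstruction \eqref{momg} with order $\ell\ge1$ and the $(I-\Pi)$ projection performed in the prediction step, is genuinely second-order, exactly as in Subsection \ref{subsec:ordre2_discr}.

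The crux of the consistency part is that the added Crank--Nicolson diffusion--drift operators in \eqref{eq:rhowithE} and \eqref{eq:vp_lag_2nd_man} do not spoil second-order accuracy. For the correction step I would use the estimate $\Delta t(1-e^{-\Delta t/\varepsilon^2})^2=\mathcal{O}(\Delta t^3/\varepsilon^4)$ for fixed $\varepsilon$, so that the operator $\partial_x(\partial_x(\cdot)-E\rho)$ is weighted by a legitimate $\mathcal{O}(\Delta t^3)$ factor. For the prediction step the weight $\frac{\Delta t}{2}(1-e^{-\Delta t/\varepsilon^2})=\mathcal{O}(\Delta t^2/\varepsilon^2)$ is only $\mathcal{O}(\Delta t^2)$; this is compatible with the first-order nature of the predictor, whose $\mathcal{O}(\Delta t^2)$ error feeds the final correction only at order $\mathcal{O}(\Delta t^3)$. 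I would also check that the velocity characteristics \eqref{eq:characwithE}-\eqref{eq:characwithE2} are the natural non-stiff forward-Euler / mid-point approximations of $\dot v_k=(1/\varepsilon)E$ consistent with the $E\partial_v g$ term, and that the Poisson solve producing $E^n$ and $E^{n+1/2}$ from $\rho^n,\rho^{n+1/2}$ via \eqref{eq:poisson} preserves the zero-mean constraint \eqref{eq:zeromean}.

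For the asymptotic-preserving statement (fixed $\Delta t>0$, $\varepsilon\to0$) I would argue as in the proof of Proposition \ref{prop:lag_2st}. As $\varepsilon\to0$ the prefactors $\frac{2e^{-\Delta t/\varepsilon^2}}{e^{-\Delta t/\varepsilon^2}+1}$ and $\frac{\Delta t}{\varepsilon}\frac{e^{-\Delta t/2\varepsilon^2}}{e^{-\Delta t/\varepsilon^2}+1}$ vanish exponentially fast, so \eqref{eq:poidswithE} yields $w_k^{n+1/2}\to0$, hence $\langle v g^{n+1/2}\rangle_i\to0$ and, since the exponential decay dominates the $1/\varepsilon$ factor, also $\frac{1}{\varepsilon}\partial_x\langle v g^{n+1/2}\rangle_i\to0$. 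Using $(1-e^{-\Delta t/\varepsilon^2})\to1$, the prediction macro step \eqref{eq:rhowithE} degenerates into $\rho_i^{n+1/2}=\rho_i^n+\frac{\Delta t}{2}\partial_x(\partial_x(\frac{\rho_i^{n+1/2}+\rho_i^n}{2})-E_i^n\rho_i^n)$ and the correction step \eqref{eq:vp_lag_2nd_man} into $\rho_i^{n+1}=\rho_i^n+\Delta t\,\partial_x(\partial_x(\frac{\rho_i^{n+1}+\rho_i^n}{2})-E_i^{n+1/2}\rho_i^{n+1/2})$, with $E$ linked to $\rho$ through \eqref{eq:poisson} at each stage. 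This is precisely an implicit (Crank--Nicolson) second-order prediction--correction discretization of the drift-diffusion equation \eqref{eq:drift-diff}.

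I expect the main obstacle to be the consistency analysis of the drift contribution: unlike the pure diffusion case, the correction now contains the nonlinear product $E\rho$ with $E$ itself determined by $\rho$ through Poisson, so one must track carefully that evaluating the field at the predicted half-step $E^{n+1/2}$ (rather than at $t^n$ or $t^{n+1}$) is what delivers second-order accuracy within the prediction--correction structure. The delicate point is that the macro update \eqref{eq:vp_lag_2nd_man} plays two competing roles --- carrying the genuine $\mathcal{O}(1/\varepsilon)$ kinetic flux for fixed $\varepsilon$ while producing the full diffusion--drift operator as $\varepsilon\to0$ --- and one must verify these remain compatible at the claimed orders, which is exactly where the calibrated weights $\frac{\Delta t}{2}(1-e^{-\Delta t/\varepsilon^2})$ and $\Delta t(1-e^{-\Delta t/\varepsilon^2})^2$ do their work.
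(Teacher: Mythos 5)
Your proposal is correct and follows essentially the approach the paper intends: the paper states this proposition without an explicit proof (the details are declared omitted earlier in Section \ref{ssec:e}), deferring implicitly to the proof of Proposition \ref{prop:lag_2st}, and your argument is exactly that proof carried over --- exponential vanishing of the weights in \eqref{eq:poidswithE} so that the kinetic flux (even divided by $\varepsilon$) disappears, leaving the Crank--Nicolson/midpoint drift-diffusion scheme in \eqref{eq:rhowithE} and \eqref{eq:vp_lag_2nd_man} --- supplemented with the correct accounting of the calibrated weights $\frac{\Delta t}{2}(1-e^{-\Delta t/\varepsilon^2})=\mathcal{O}(\Delta t^2)$ and $\Delta t(1-e^{-\Delta t/\varepsilon^2})^2=\mathcal{O}(\Delta t^3)$ for fixed $\varepsilon$. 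Your additional checks on the electric-field characteristics, the Poisson solve, and the role of $E^{n+1/2}\rho^{n+1/2}$ in preserving second order are consistent with (and fill in) what the paper leaves to the reader.
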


The scheme is finally summarized in the following algorithm.
\begin{algo}~~
\begin{itemize}
\item  Initialize $(x_k^0, v_k^0)$, $\omega_k^0$,  and $\rho_i^{0}$.
\item Compute $E_i^{0}$ thanks to FFT or finite differences.

At each time step:

\textbf{Prediction step: from $t^n$ to $t^{n+1/2}$.}
\item 1) Advance micro part: 
\begin{itemize}
\item advance the characteristics with (\ref{eq:characwithE}),
\item compute $\langle v g\rangle$ with (\ref{momg}) and B-spline functions of order $\ell\geq 1$, 
\item advance the equation on the weights with (\ref{eq:poidswithE}).
\end{itemize}
\item 2) Projection step: compute $(I-\Pi)g^{n+1/2}$ using \cite{ccl}.
\item 3) Advance macro part: 
\begin{itemize}
\item compute $\langle v g^{n+1/2}\rangle$ with (\ref{momg}) and B-spline functions of order $\ell\geq 1$, 
\item compute $\rho_i^{n+1/2}$ with (\ref{eq:rhowithE}),
\item compute $E_i^{n+1/2}$ thanks to FFT or finite differences.
\end{itemize}

\textbf{Correction step: from $t^n$ to $t^{n+1}$.}
\item 4) Advance micro part: 
\begin{itemize}
\item advance the characteristics with (\ref{eq:characwithE2}),
\item compute $\langle v g\rangle$ with (\ref{momg}) and B-spline functions of order $\ell\geq 1$,
\item advance the equation on the weights with (\ref{eq:poidswithE2}).
\end{itemize}
\item 5) Advance macro part: 
\begin{itemize}
\item compute $\rho_i^{n+1}$ with (\ref{eq:vp_lag_2nd_man}),
\item compute $E_i^{n+1}$ thanks to FFT or finite differences.
\end{itemize}
\end{itemize}
\label{algo:vp_lag_2nd}
\end{algo}

\begin{remark}
We propose to use an upwind discretization of the derivative $\partial_x\left(E_i^{n+1/2}\rho_i^{n+1/2}\right)$:
$$
\partial_x\left(E_i^{n+1/2}\rho_i^{n+1/2}\right)\approx\frac{E_i^{n+1/2,+}\rho_i^{n+1/2}+E_i^{n+1/2,-}\rho_{i+1}^{n+1/2}-E_{i-1}^{n+1/2,+}\rho_{i-1}^{n+1/2}-E_{i-1}^{n+1/2,-}\rho_i^{n+1/2}}{\Delta x},
$$
where the standard notations $u^+=\mathrm{max}(u,0)$ and $u^-=\mathrm{min}(u,0)$ are used.
The same discretization is done for $\partial_x\left(E_i^{n}\rho_i^{n}\right)$ in the prediction step.
\end{remark}




\section{Numerical results}\label{sec:numres}
\setcounter{equation}{0}

This section is devoted to some numerical experiments comparing the here designed micro-macro model with particles (denoted by MiMa-Part-1 for the first-order scheme and by MiMa-Part-2 for the second-order scheme) to the micro-macro Eulerian model (denoted by MiMa-Grid), the moment guided method (denoted by Moment G.) and (i) the Full PIC method in kinetic regimes ($\varepsilon$ of order 1) or (ii) the limit scheme in diffusion regime ($\varepsilon\ll 1$). MiMa-Part-1 corresponds to Proposition \ref{prop:lag_1st_imp} and MiMa-Part-2 corresponds to Proposition \ref{prop:lag_2st}. The micro-macro Eulerian scheme is presented in Appendix \ref{subsec:eulerian}. The moment guided method was first presented in \cite{ddp} and is adapted to our context in Appendix \ref{app:mgm}. The Full PIC method \cite{birdsall} consists in applying the particle representation (\ref{eq:diracmasses}) to the whole function $f$ (and not only to the perturbation $g$) and to solve the characteristics and the equations on weights coming from equation \eqref{eq:etrbgk} or \eqref{eq:vlasovbgk}. 

In the sequel, we consider three families of test cases: radiative transport equation (RTE) test cases with periodic boundary conditions (see equation (\ref{eq:etrbgk})) in Subsection \ref{ssec:RTEperio}, 
Vlasov-Poisson-BGK test cases (see equations (\ref{eq:vlasovbgk})-(\ref{eq:vlasovbgkpoisson})) of Landau damping type in Subsection \ref{ssec:landau} and two-stream instability (TSI) test cases in Subsection \ref{ssec:tsi}.


\subsection{RTE with periodic boundary conditions}\label{ssec:RTEperio}

We consider the RTE test case given by the initial condition
\begin{equation}
f\left(t=0,x,v\right)=1+\cos\left(2\pi\left(x+\frac{1}{2}\right)\right),~~~x\in\left[0,1\right], v\in\left[-1,1\right],
\end{equation}
with $M\left(v\right)=1,~\forall v\in\left[-1,1\right]$, and periodic boundary conditions in $x$.


We propose here to verify numerically the convergence of MiMa-Part-2 (presented in Subsection \ref{ssec:pic_o2}), Lagrangian in space, of second-order in time and with an implicit treatment of the diffusion term (see Algorithm \ref{algo:2ndorder}). In Figure \ref{fig:etr_ordre2_1}, we plot the error in $L^\infty$ norm of the density $\rho$ at time $T=0.1$ as a function of $\Delta t$ (from $10^{-4}$ to $0.1$) for the following parameters: $N_x=16$, $N_p=100$. For $\varepsilon=1$, $0.5$ and $0.1$, the reference solution is computed with MiMa-Part-2 using the same parameters but with $\Delta t=10^{-7}$. Whereas for $\varepsilon=10^{-6}$, the reference is a numerical solution of the diffusion equation (computed on a space grid, with $N_x=16$ and $\Delta t=10^{-7}$).
In Figure \ref{fig:etr_ordre2_cloche}, the error in $L^\infty$ norm is now represented as a function of $\varepsilon$ for different values of $\Delta t$: $10^{-1}$, $10^{-2}$, $10^{-3}$ and $10^{-4}$.

\begin{minipage}[t]{0.48\textwidth}
\begin{figure}[H]
\includegraphics[angle=-90,width=\textwidth]{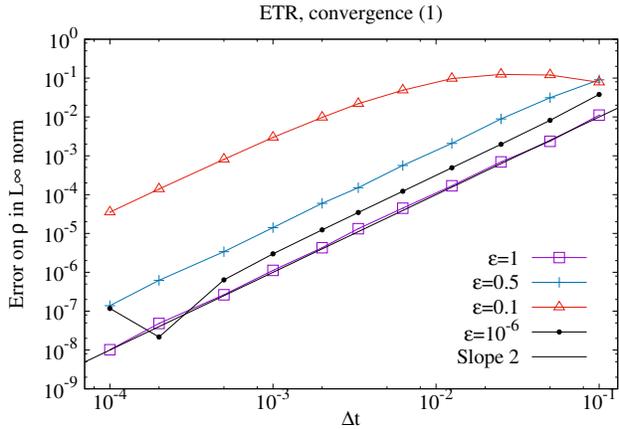}
\caption{Error in $L^\infty$ norm of $\rho$ at time $T=0.1$ as a function of $\Delta t$ for $N_x=16$, $N_p=100$, $\varepsilon=1$, $0.5$, $0.1$ and $10^{-6}$.}
\label{fig:etr_ordre2_1}
\end{figure}
\end{minipage}
~~
\begin{minipage}[t]{0.48\textwidth}
\begin{figure}[H]
\includegraphics[angle=-90,width=\textwidth]{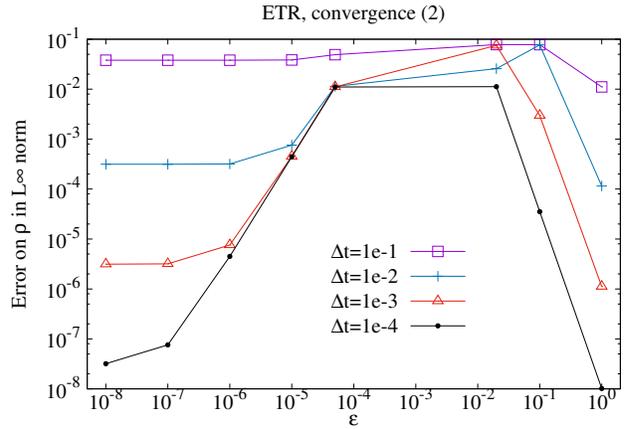}
\caption{Error in $L^\infty$ norm of $\rho$ at time $T=0.1$ as a function of $\varepsilon$ for $N_x=16$, $N_p=100$, $\Delta t=10^{-1}$, $10^{-2}$, $10^{-3}$ and $10^{-4}$.}
\label{fig:etr_ordre2_cloche}
\end{figure}
\end{minipage}
~~

These plots confirm the fact that MiMa-Part-2 is second-order accurate in time for any fixed $\varepsilon>0$, and also when $\varepsilon\to 0$. However, for intermediate regimes (for instance $\varepsilon=0.1$), order reduction is observed. This is a classical observation for AP schemes. 
Note that similar behaviour is obtained with $L^2$ norm. 

In Figure \ref{fig:etr_ordre2_ap}, we verify the AP property of the MiMa-Part-2 scheme and plot the density $\rho(T=0.1,x)$ as a function of $x$ for different values of $\varepsilon$: $1$, $0.25$, $10^{-2}$ and $10^{-6}$. We take fixed parameters: $N_x=64$, $\Delta t=10^{-3}$ and $N_p=10^4$. We compare the solutions obtained by MiMa-Part-2 to a numerical solution of the diffusion equation (computed on a space grid, with $N_x=512$ and $\Delta t=\Delta x^2$) and see that the $\varepsilon$-dependent solutions come closer to the diffusion one when $\varepsilon$ decreases.

Moreover, we illustrate in Figure \ref{fig:etr_ordre2_npart} the fact that the cost of our method is very small at the limit. For that, we plot the density $\rho(T=0.1,x)$ as a function of $x$ for $\varepsilon=10^{-6}$ ($N_x=64$ and $\Delta t=10^{-2}$) and see that $N_p=100$ is sufficient to represent in a good way (without noise) the density. The numerical cost is then very close to the one of the asymptotic model.


\begin{minipage}[t]{0.48\textwidth}
\begin{figure}[H]
\includegraphics[angle=-90,width=\textwidth]{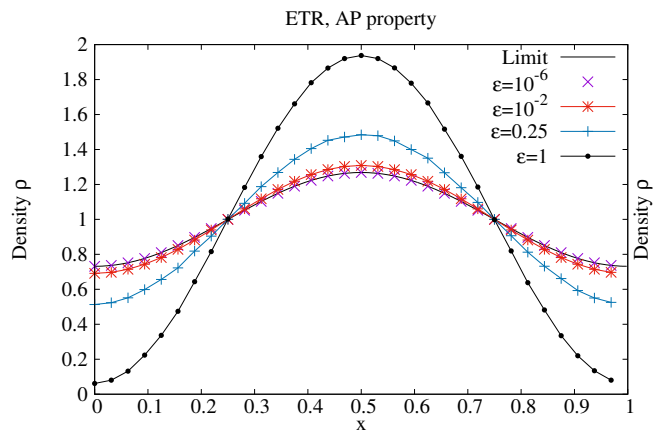}
\caption{AP property. Density $\rho(T=0.1,x)$ for $\varepsilon=1$, $0.25$, $10^{-2}$ and $10^{-6}$. $N_x=64$, $\Delta t=10^{-3}$ and $N_p=10^4$. Comparison with the diffusion solution.}
\label{fig:etr_ordre2_ap}
\end{figure}
\end{minipage}
~~
\begin{minipage}[t]{0.48\textwidth}
\begin{figure}[H]
\includegraphics[angle=-90,width=\textwidth]{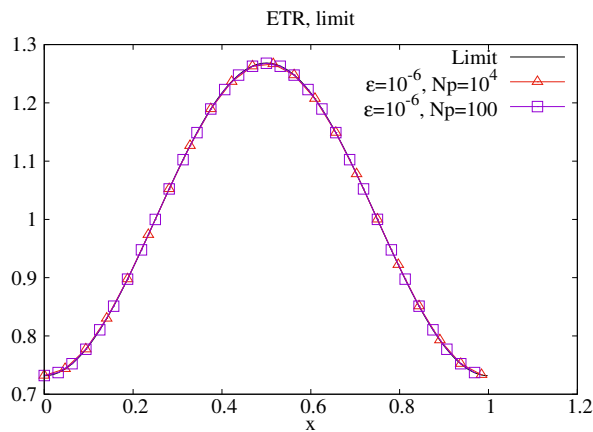}
\caption{Cost at the limit. Density $\rho(T=0.1,x)$ for $\varepsilon=10^{-6}$, $N_x=64$, $\Delta t=10^{-2}$ and $N_p=10^4$ or $N_p=100$. Comparison with the diffusion solution.}
\label{fig:etr_ordre2_npart}
\end{figure}
\end{minipage}
~~

\subsection{Landau damping}\label{ssec:landau}

In this subsection, we present Landau damping test cases in both regimes (kinetic when $\varepsilon=\mathcal{O}(1)$ and diffusive when $\varepsilon\to 0$). The initial distribution function is given by 
\begin{equation}
f\left(t=0,x,v\right)=\frac{1}{\sqrt{2\pi}}\textrm{exp}\left(-\frac{v^2}{2}\right)\left(1+\alpha\cos\left(kx\right)\right),~~~x\in\left[0,\frac{2\pi}{k}\right],~v\in \mathbb{R},
\end{equation}
with the wave number $k=0.5$ and $\alpha=0.05$. For the micro-macro model (\ref{eq:vlasovbgk}), the initial condition is $\rho\left(t=0,x\right)=1+\alpha\cos\left(kx\right)$ and $g\left(t=0,x,v\right)=0$. For the limit drift-diffusion equation (\ref{eq:drift-diff}), we have $\rho\left(t=0,x\right)=1+\alpha\cos\left(kx\right)$.

We first verify the order in time of the MiMa-Part-2 scheme detailed in Section \ref{ssec:e} and plot in Figure \ref{fig:vp_ordre2_1} the error in $L^\infty$ norm of the density $\rho$ at time $T=0.1$ as a function of $\Delta t$ (from $10^{-4}$ to $0.1$) for the following parameters: $N_x=16$, $N_p=100$. For $\varepsilon=1$, $0.5$ and $0.1$, the reference solution is computed with MiMa-Part-2 using the same parameters but with $\Delta t=10^{-7}$. Whereas for $\varepsilon=10^{-6}$ the reference is a numerical solution of the drift-diffusion equation (computed on a space grid, with $N_x=16$ and $\Delta t=10^{-7}$).

Results are similar to the RTE case: the second-order in time is preserved for big and small values of $\varepsilon$ but not for intermediate regimes.

We are now interested in more qualitative tests by considering the time history of the electric energy $\mathcal{E}\left(t\right)=\sqrt{\int_0^{L_x} E\left(t,x\right)^2\dd x}$ in semi-logarithmic scale for different values of $\varepsilon$. We compare the results obtained by MiMa-Part-2 (detailed in Algorithm \ref{algo:vp_lag_2nd}) to other schemes: MiMa-part-1, Moment G. and Full PIC (for $\varepsilon$ of order 1) or the scheme for the drift-diffusion model (for small values of $\varepsilon$).
 
We expect that the number of particles that is necessary to represent in a good way the perturbation $g$ in MiMa-Part methods decreases when $\varepsilon$ diminishes and consider $N_p=10^{5}$ if $\varepsilon\geq 0.5$, $N_p=10^{4}$ if $\varepsilon=0.1$ and $N_p=10^{2}$ if $\varepsilon=10^{-4}$. For comparison, we take the same $N_p$ for moment guided and Full PIC methods.

%

Results for $\varepsilon=10$ are given in Figure \ref{figLandaukin10}. For the four particle methods, we take $\Delta t=0.1$, and $N_x=128$. With the same parameters, results for $\varepsilon=1$ are presented in Figure \ref{figLandaukin}. 
For $\varepsilon=0.5$, we consider $\Delta t=0.01$ and $N_x=256$ for the four particle methods. Results are given in Figure \ref{figLandauinter}. For these three values of $\varepsilon$, the reference is given by MiMa-Grid with $N_x=N_v=512$ and $\Delta t=\Delta x^2\approx 6\times 10^{-4}$. First, we note that the behaviour of the electric energy is well described during time by micro-macro schemes (MiMa-Part and MiMa-Grid). As observed in \cite{ccl}, the Full PIC method suffers from numerical noise. This is due to the probabilistic character of particle methods (for instance the random initialization of particles). To reduce this noise, we should consider more particles, which would increase the numerical cost. As expected, the moment guided method gives better results than the Full PIC one, but suffers however also from this noise. In MiMa-Part schemes, only the perturbation $g$ is represented by particles (not the whole distribution function $f$), that is why for the same $N_p$, the noise is lower, which enables to capture the reference solution for large times. In addition, MiMa-Part-2 is closer to the reference MiMa-Grid than the first-order version MiMa-Part-1.


\begin{minipage}[t]{0.48\textwidth}
\begin{figure}[H]
\includegraphics[angle=-90,width=\textwidth]{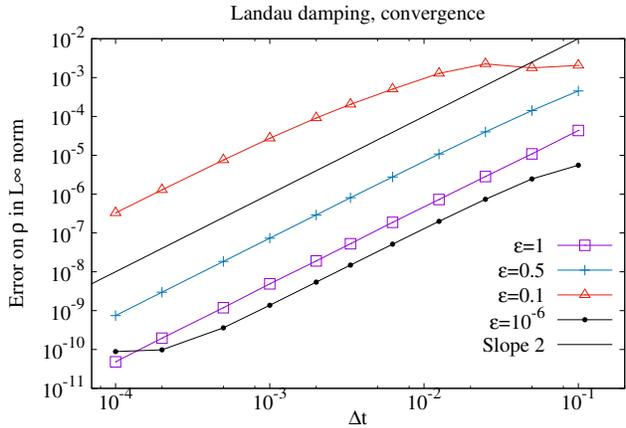}
\caption{Error in $L^\infty$ norm of $\rho$ at time $T=0.1$ as a function of $\Delta t$ for $N_x=16$, $N_p=100$, $\varepsilon=1$, $0.5$, $0.1$ and $10^{-6}$.}
\label{fig:vp_ordre2_1}
\end{figure}
\end{minipage}
~~
\begin{minipage}[t]{0.48\textwidth}
\begin{figure} [H]
\includegraphics[angle=-90,width=\textwidth]{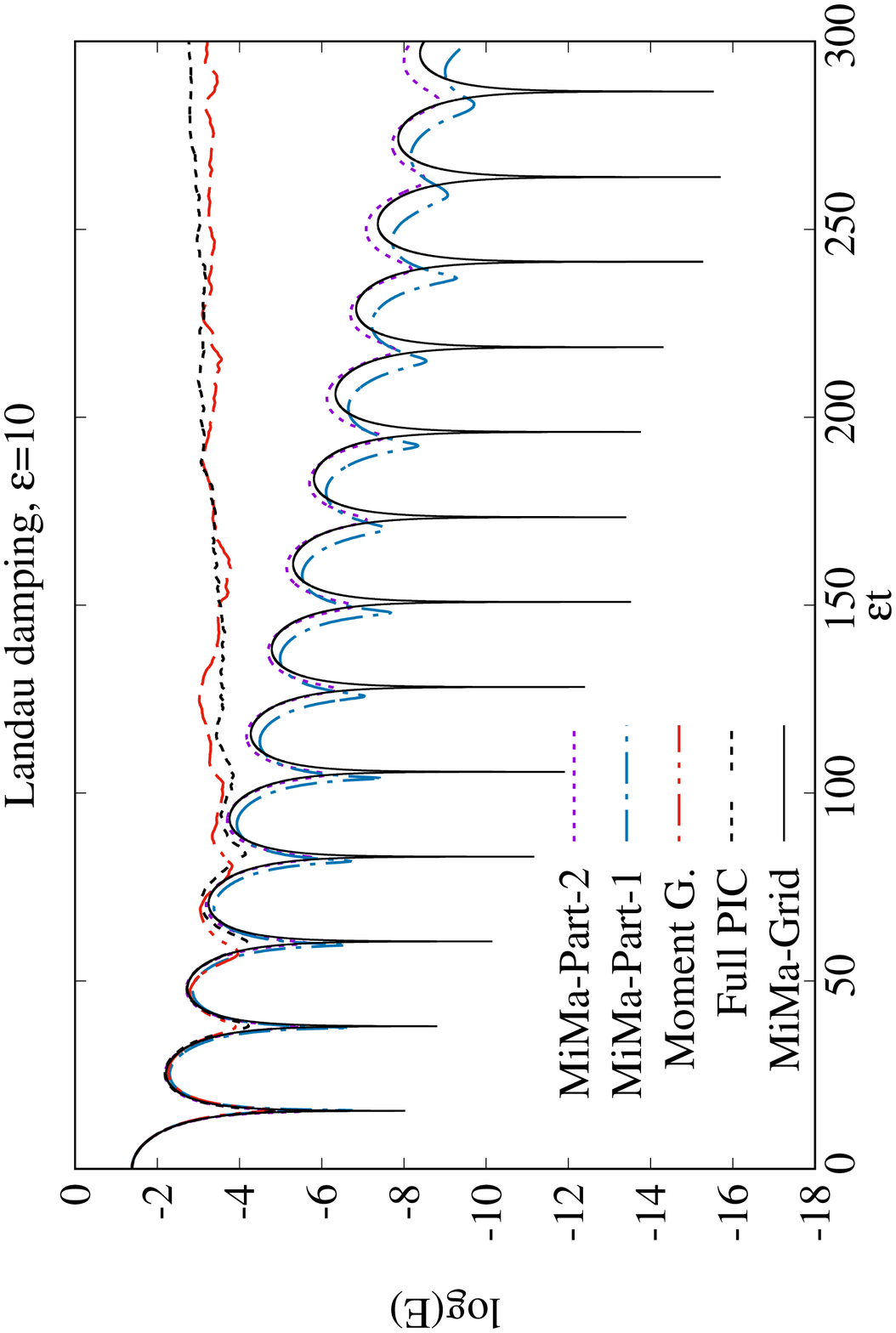}
\caption{Time history of the electric energy, $\varepsilon=10$. $\Delta t=0.1$, $N_x=128$ and $N_p=10^5$ for the four particle methods.}
\label{figLandaukin10} 
\end{figure}
\end{minipage}

\begin{minipage}[t]{0.48\textwidth}
\begin{figure} [H]
\includegraphics[angle=-90,width=\textwidth]{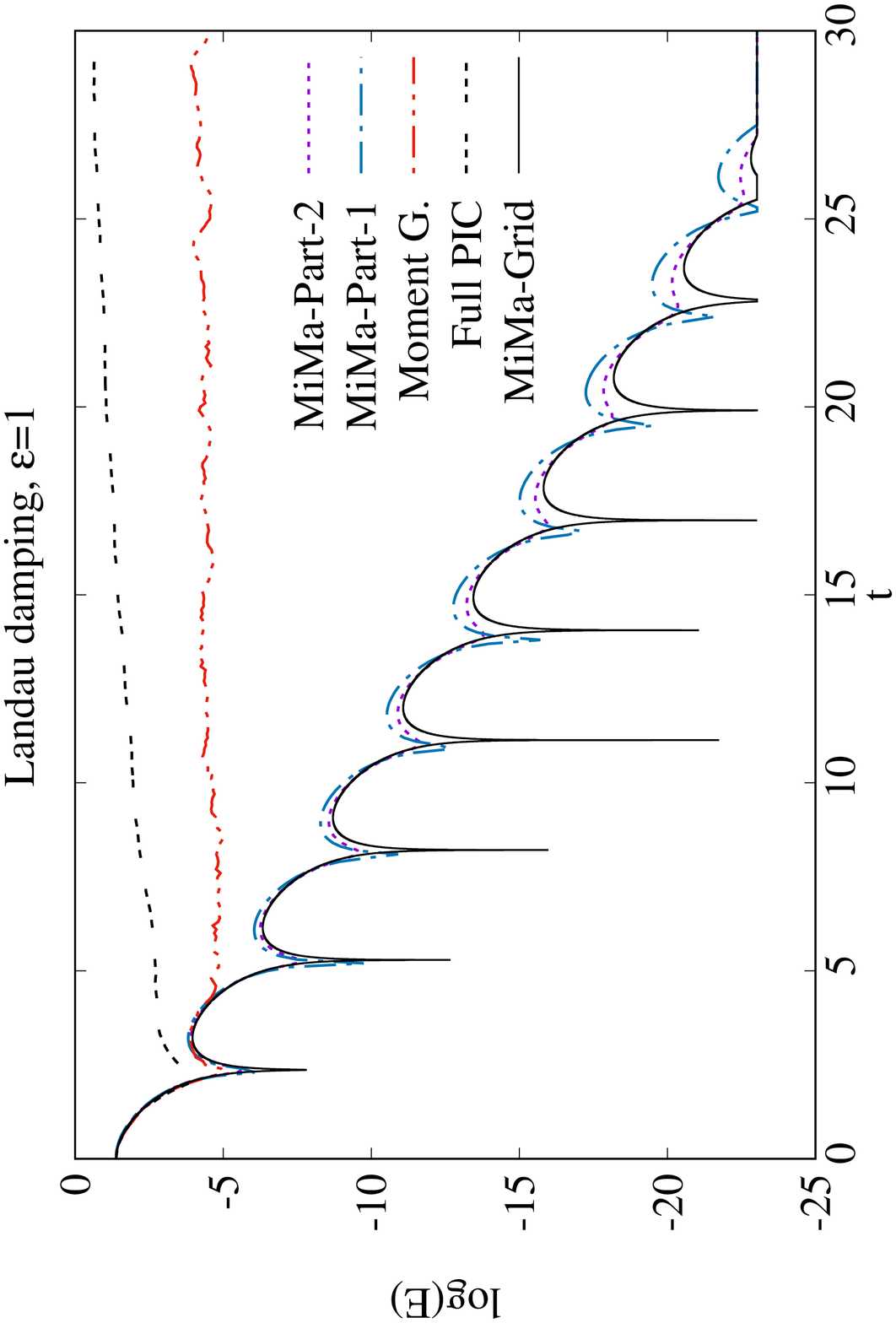}
\caption{Time history of the electric energy, $\varepsilon=1$. $\Delta t=0.1$, $N_x=128$ and $N_p=10^5$ for the four particle methods.}
\label{figLandaukin} 
\end{figure}
\end{minipage}
~~
\begin{minipage}[t]{0.48\textwidth}
\begin{figure} [H]
\includegraphics[angle=-90,width=\textwidth]{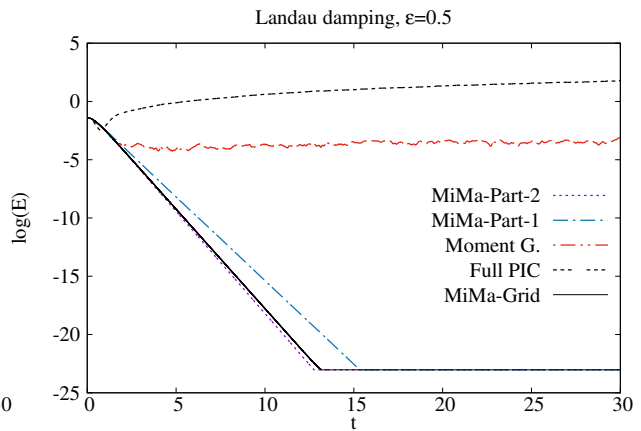}
\caption{Time history of the electric energy, $\varepsilon=0.5$. $\Delta t=0.01$, $N_x=256$ and $N_p=10^5$ for the four particle methods.}
\label{figLandauinter} 
\end{figure}
\end{minipage}
~~

For smaller values of $\varepsilon$, we compare the four AP schemes (MiMa-Part-1, MiMa-Part-2, MiMa-Grid and Moment G.) to the limit scheme. Results for $\varepsilon=0.1$ are given in Figure \ref{figLandaudif}. Parameters are the following: $\Delta t=10^{-3}$ and $N_x=128$ for particle methods, $\Delta t=0.1\Delta x^2\approx 3.5\times 10^{-3}$ and $N_x=N_v=64$ for MiMa-Grid. We observe that MiMa-Part-2 is the best method since it almost coincides with the reference MiMa-Grid method. Finally, results for $\varepsilon=10^{-4}$ are given in Figure \ref{figLandaudiflim}, where we have $\Delta t=10^{-2}$ and $N_x=128$ for particle methods, $\Delta t=0.1\Delta x^2\approx 3.5\times 10^{-3}$ and $N_x=N_v=64$ for MiMa-Grid. The asymptotic regime is well recovered by all these AP methods.

\begin{minipage}[t]{0.48\textwidth}
\begin{figure} [H]
\includegraphics[angle=-90,width=\textwidth]{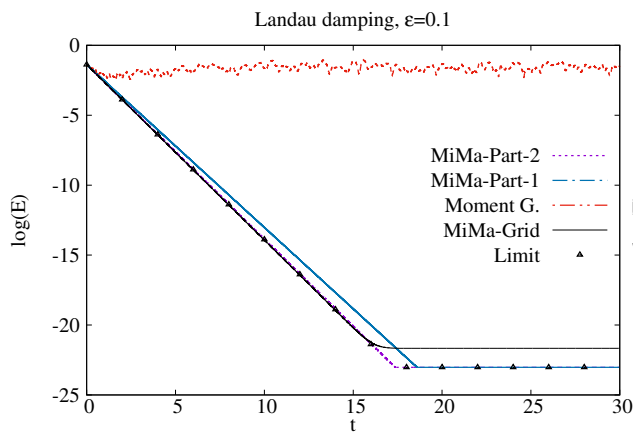}~~~~
\caption{Time history of the electric energy, $\varepsilon=0.1$. $\Delta t=10^{-3}$, $N_x=128$ and $N_p=10^4$ for the four particle methods.}
\label{figLandaudif} 
\end{figure}
\end{minipage}
~~
\begin{minipage}[t]{0.48\textwidth}
\begin{figure} [H]
\includegraphics[angle=-90,width=\textwidth]{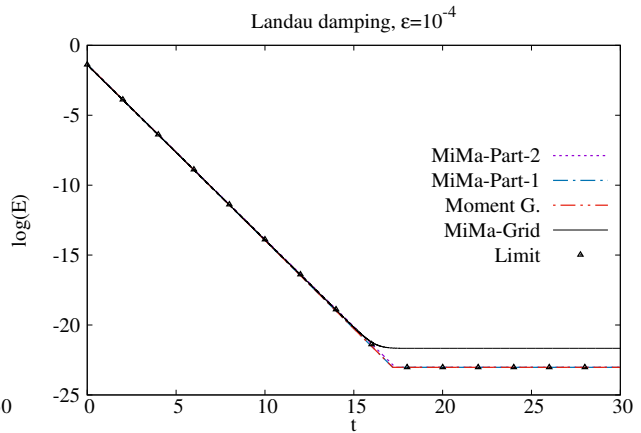}
\caption{Time history of the electric energy, $\varepsilon=10^{-4}$. $\Delta t=0.01$, $N_x=128$ and $N_p=100$ for the four particle methods.}
\label{figLandaudiflim} 
\end{figure}
\end{minipage}
~~

As in \cite{ccl}, we remark that few particles are sufficient in the particle-micro-macro schemes to describe in a good way the solution when $\varepsilon$ is small. The cost  is then reduced at the limit.



\subsection{Two stream instability}\label{ssec:tsi}

We propose now a study in which the perturbation $g$ is not zero initially and consider the Two-Stream Instability (TSI) test case in both regimes (kinetic and diffusive). The initial distribution function is given by 
\begin{equation}
f\left(t=0,x,v\right)=\frac{1}{\sqrt{2\pi}}v^2\textrm{exp}\left(-\frac{v^2}{2}\right)\left(1+\alpha\cos\left(kx\right)\right),~~~x\in\left[0,\frac{2\pi}{k}\right],~v\in\mathbb{R},
\end{equation}
with the wave number $k=0.5$ and $\alpha=0.05$. The initial condition for the micro-macro model (\ref{eq:vlasovbgk}) is $\rho\left(t=0,x\right)=1+\alpha\cos\left(kx\right)$ and $g\left(t=0,x,v\right)=\frac{1}{\sqrt{2\pi}}\left(v^2-1\right)\textrm{exp}\left(-\frac{v^2}{2}\right)\left(1+\alpha\cos\left(kx\right)\right)$. For the limit drift-diffusion equation (\ref{eq:drift-diff}), we have as in the Landau damping case $\rho\left(t=0,x\right)=1+\alpha\cos\left(kx\right)$.

We first verify the order in time of the MiMa-Part-2 scheme detailed in Section \ref{ssec:e} and plot in Figure \ref{fig:tsi_ordre2_1} the error in $L^\infty$ norm of the density $\rho$ at time $T=0.1$ as a function of $\Delta t$ (from $10^{-4}$ to $0.1$) for the following parameters: $N_x=16$, $N_p=100$. For $\varepsilon=1$, $0.5$ and $0.1$, the reference solution is computed with MiMa-Part-2 using the same parameters but with $\Delta t=10^{-7}$. Whereas for $\varepsilon=10^{-6}$, the reference is a numerical solution of the drift-diffusion equation (computed on a space grid, with $N_x=16$ and $\Delta t=10^{-7}$).

As for the RTE and the Landau damping cases, the second-order in time is preserved for big and small values of $\varepsilon$ but not for intermediate regimes.

We are now interested in the time evolution of the electric energy $\mathcal{E}\left(t\right)=\sqrt{\int_0^{L_x} E\left(t,x\right)^2\dd x}$ in all regimes. 

Results for $\varepsilon=10$ are given in Figure \ref{figTSIkin10}. For the four particle methods, we take $N_p=10^{6}$, $\Delta t=0.1$, and $N_x=128$. By taking $N_p=10^{5}$, $\Delta t=0.1$, and $N_x=128$ for particle methods, we obtain results for $\varepsilon=1$ presented in Figure \ref{figTSIkin}. For $\varepsilon=0.5$, we consider $N_p=10^{5}$, $\Delta t=0.01$ and $N_x=256$ for the four particle methods. Results are given in Figure \ref{figTSIinter}. For these three values of $\varepsilon$, the reference is given by MiMa-Grid with $N_x=N_v=512$ and $\Delta t=\Delta x^2\approx 6\times 10^{-4}$. The behaviour of the electric energy is well described during time by micro-macro schemes (MiMa-Part and MiMa-Grid). As previously, the Full PIC method, as well as moment guided method suffer from numerical noise.

\begin{minipage}[t]{0.48\textwidth}
\begin{figure}[H]
\includegraphics[angle=-90,width=\textwidth]{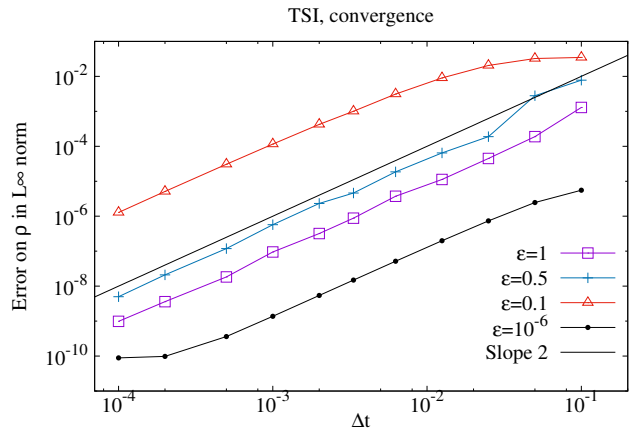}
\caption{Error in $L^\infty$ norm of $\rho$ at time $T=0.1$ as a function of $\Delta t$  for $N_x=16$, $N_p=100$, $\varepsilon=1$, $0.5$, $0.1$ and $10^{-6}$.}
\label{fig:tsi_ordre2_1}
\end{figure}
\end{minipage}
~~
\begin{minipage}[t]{0.48\textwidth}
\begin{figure} [H]
\includegraphics[angle=-90,width=\textwidth]{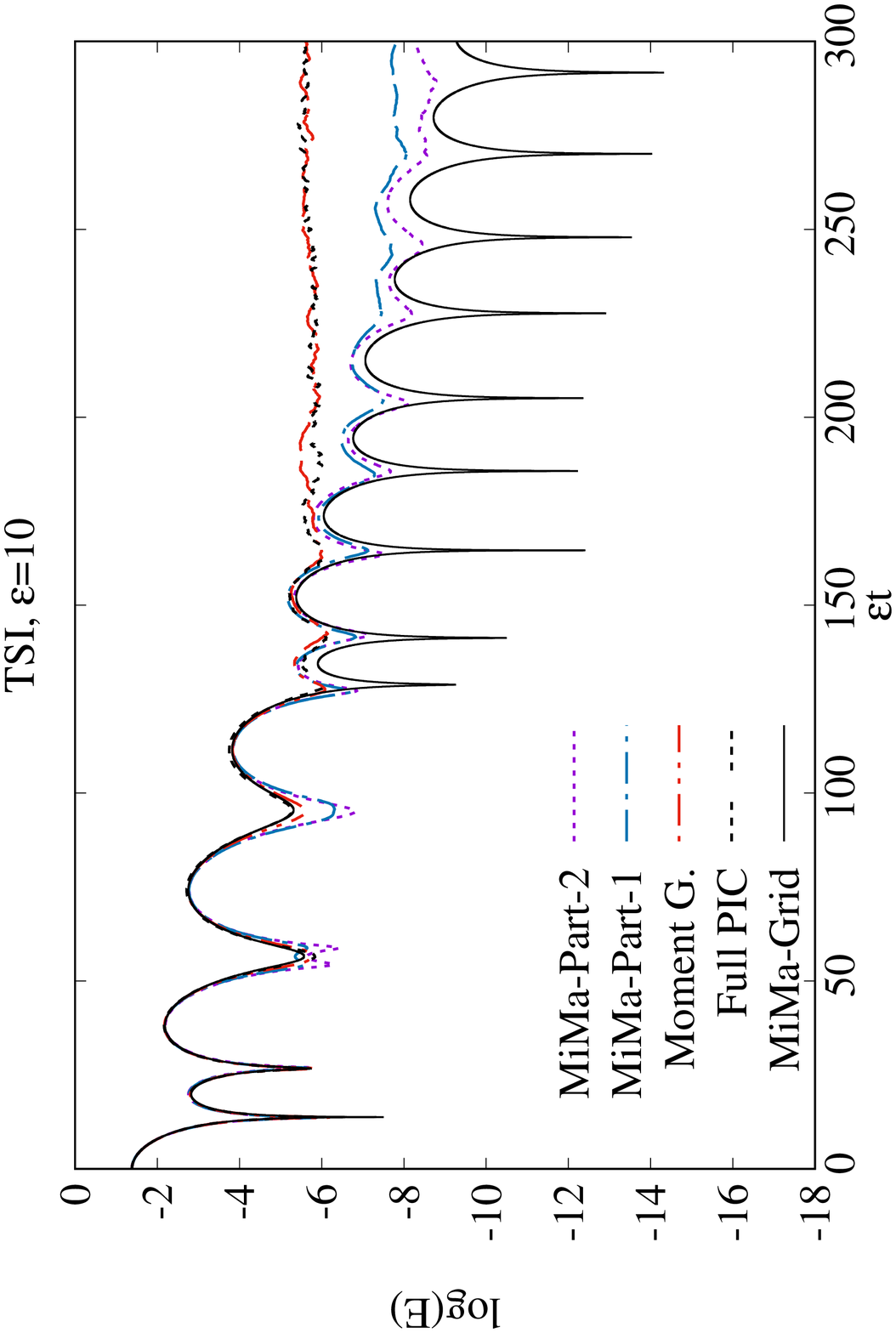}
\caption{Time history of the electric energy, $\varepsilon=10$. $\Delta t=0.1$, $N_x=128$ and $N_p=10^6$ for the four particle methods.}
\label{figTSIkin10} 
\end{figure}
\end{minipage}

\begin{minipage}[t]{0.48\textwidth}
\begin{figure} [H]
\includegraphics[angle=-90,width=\textwidth]{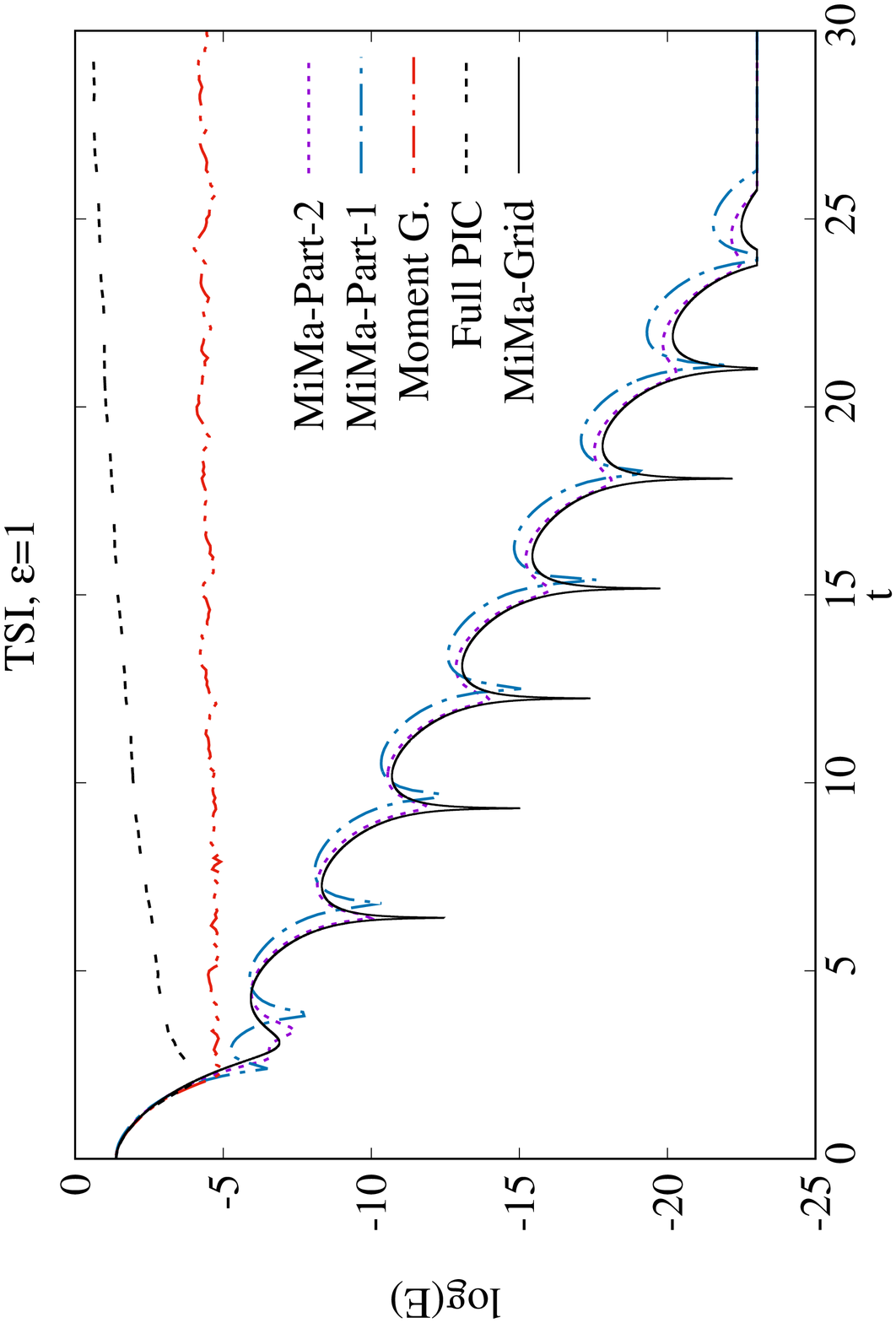}
\caption{Time history of the electric energy, $\varepsilon=1$. $\Delta t=0.1$, $N_x=128$ and $N_p=10^5$ for the four particle methods.}
\label{figTSIkin} 
\end{figure}
\end{minipage}
~~
\begin{minipage}[t]{0.48\textwidth}
\begin{figure} [H]
\includegraphics[angle=-90,width=\textwidth]{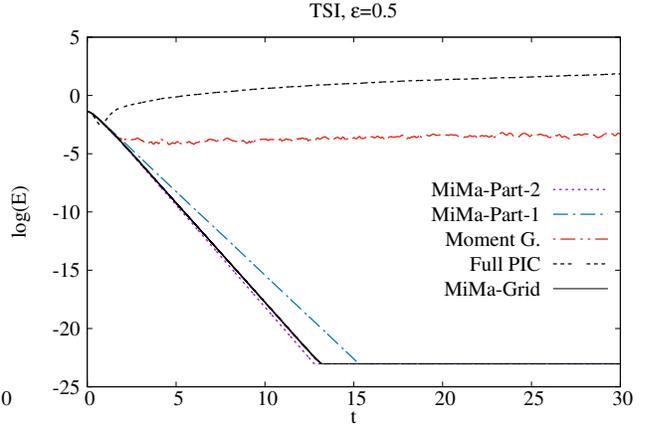}
\caption{Time history of the electric energy, $\varepsilon=0.5$. $\Delta t=0.01$, $N_x=256$ and $N_p=10^5$ for the four particle methods.}
\label{figTSIinter} 
\end{figure}
\end{minipage}
~~

To illustrate the efficiency of the method, we plot $f(T=5,x,v)$ obtained by the reference MiMa-Grid, by MiMa-Part-2 and by Full PIC for $\varepsilon=10$ on Figure \ref{fig3Deps10} and for $\varepsilon=0.5$ on Figure \ref{fig3Deps05}. For MiMa-Grid and MiMa-Part-2, $f$ is reconstruted from $g$, $\rho$ and $M$, whereas the approximation of $f$ is directly given by the Full PIC scheme. The numerical parameters are the same as previously (see comments on Figures \ref{figTSIkin10} and \ref{figTSIinter}). On Figure \ref{fig3Deps10}, we observe that the result obtained by MiMa-Part-2 and Full PIC are in good agreement with MiMa-Grid; however, some numerical noise can be distinguished on $f$ obtained by the Full PIC method. On Figure \ref{fig3Deps05} ($\varepsilon=0.5$), we can see clearly that the level of the noise is higher for Full PIC, which prevents it from giving good results. On the contrary, MiMa-Part-2 produces good results compared to MiMa-Grid, since the noise only affects the micro part $g$, which is small in this regime.

\begin{figure} [H]
\includegraphics[angle=-90,width=0.4\textwidth]{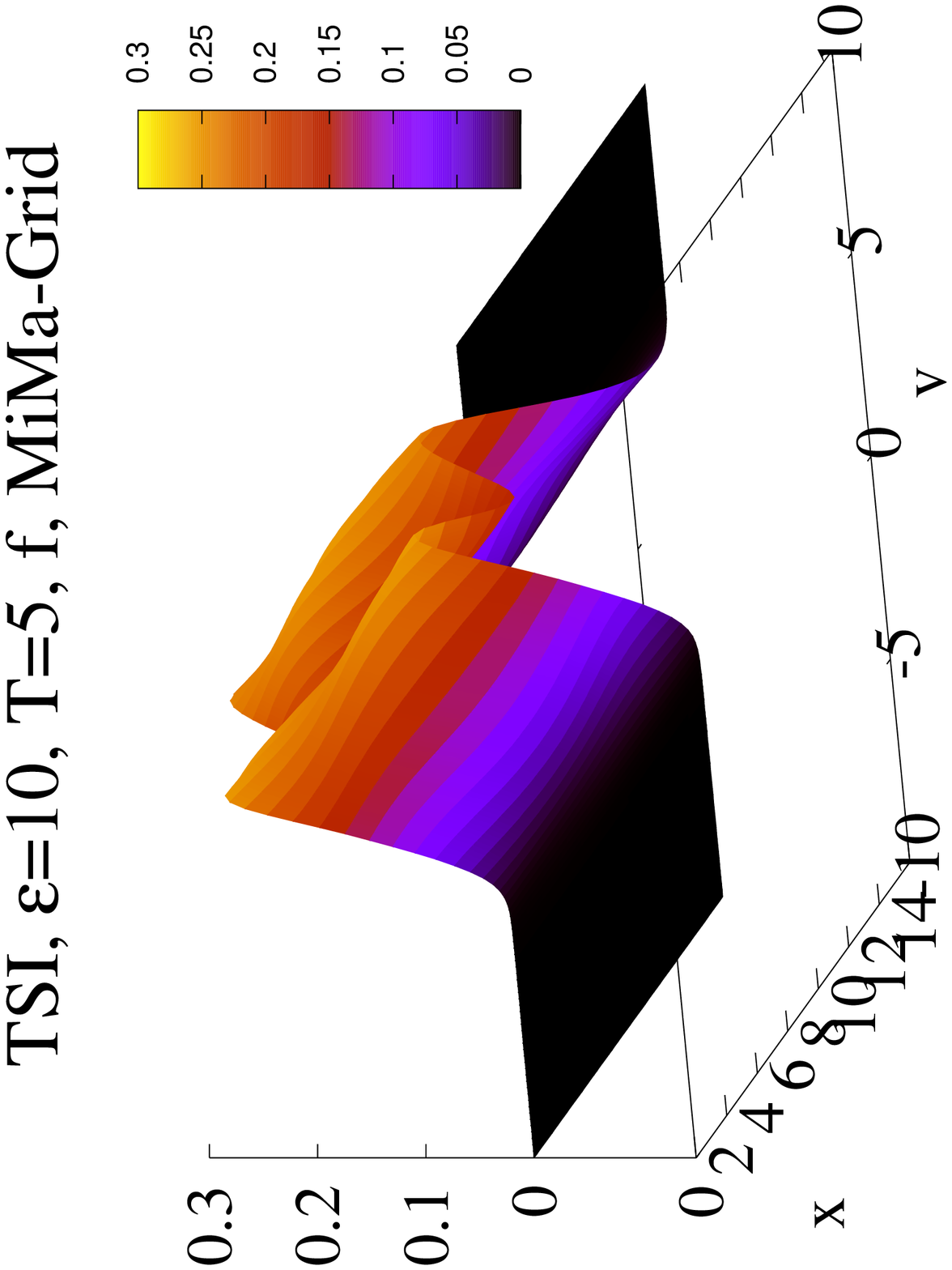}~~\hspace{-1.6cm}~~
\includegraphics[angle=-90,width=0.4\textwidth]{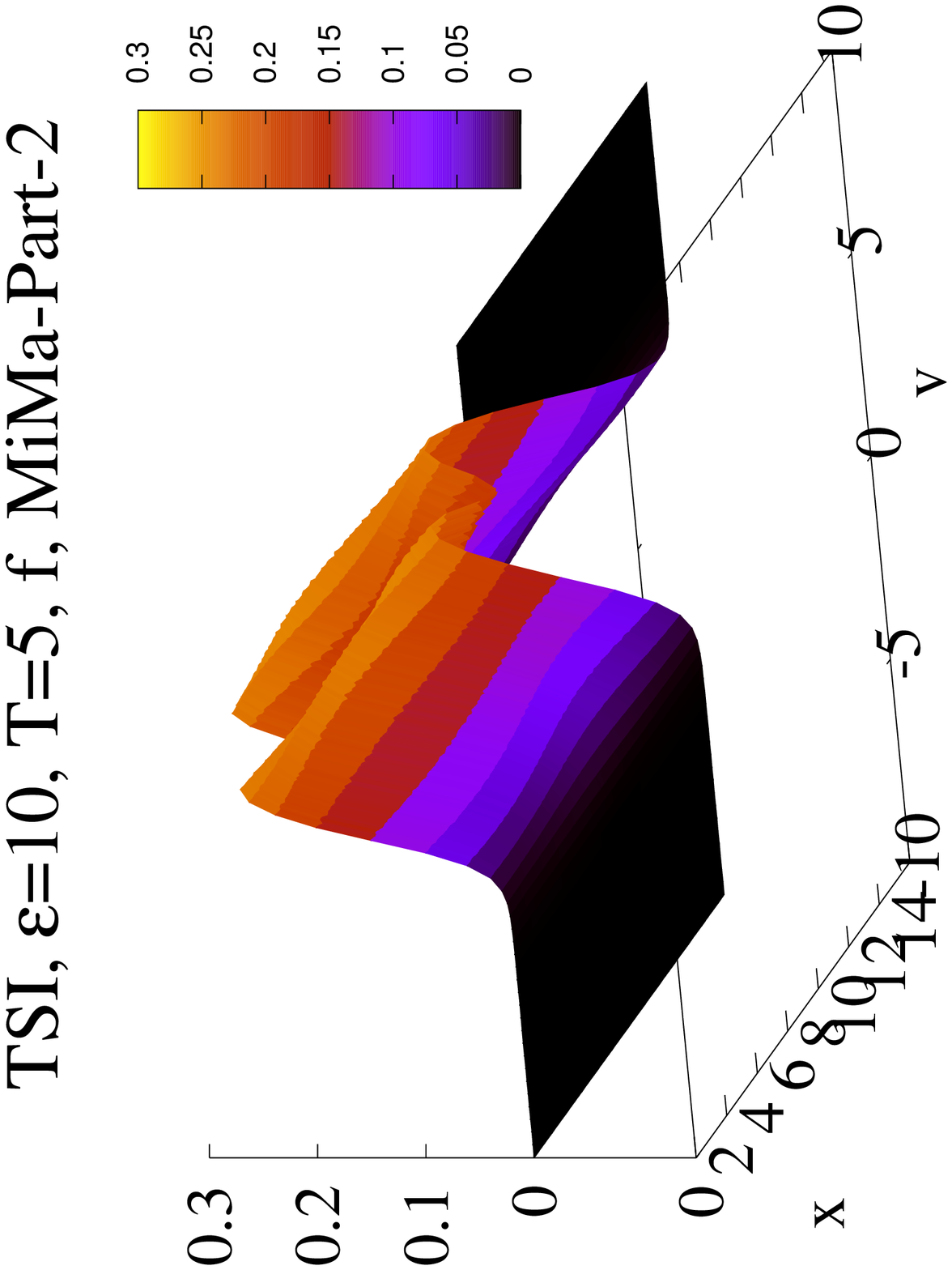}~~\hspace{-1.6cm}~~
\includegraphics[angle=-90,width=0.4\textwidth]{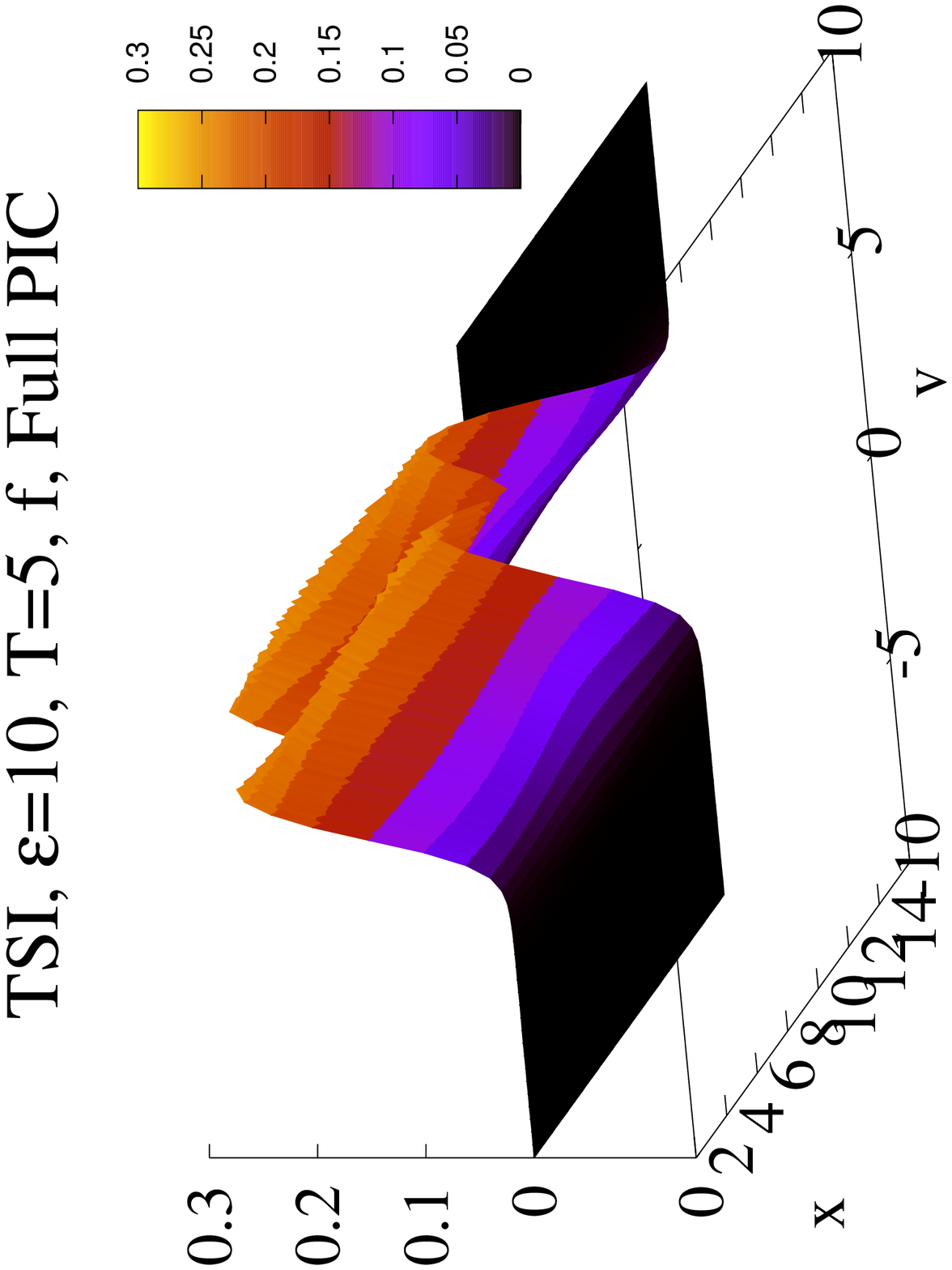}
\caption{Representation of $f(T=5,x,v)$, $\varepsilon=10$. Distribution function $f$ reconstructed from MiMa-Grid on the left, from MiMa-Part-2 on the middle and obtained by Full PIC on the right. $\Delta t=0.1$, $N_x=128$ and $N_p=10^6$ for both particle methods. $N_x=N_v=512$ and $\Delta t\approx 6\times 10^{-4}$ for MiMa-Grid.}
\label{fig3Deps10} 
\end{figure}

\begin{figure} [H]
\includegraphics[angle=-90,width=0.4\textwidth]{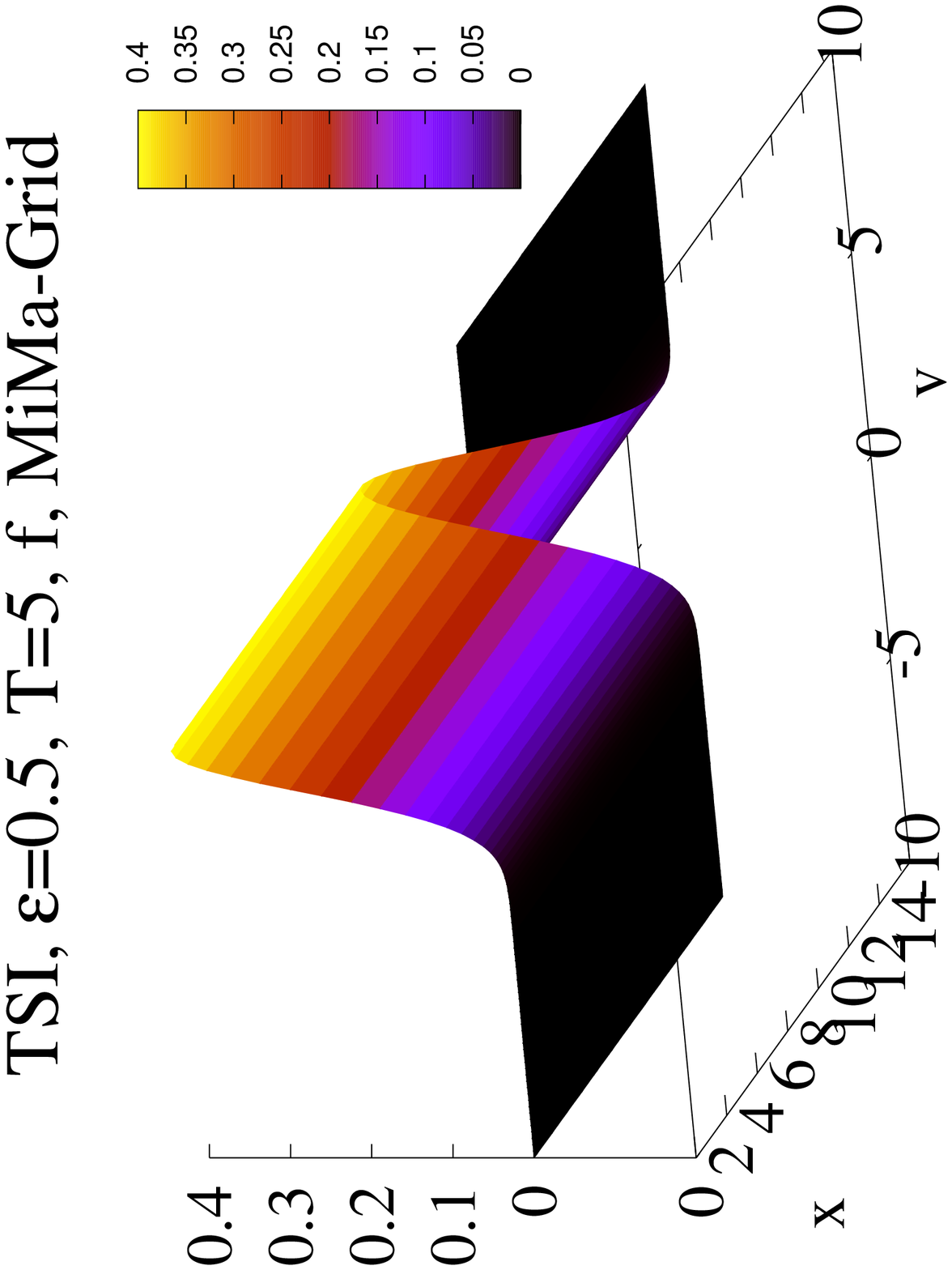}~~\hspace{-1.6cm}~~
\includegraphics[angle=-90,width=0.4\textwidth]{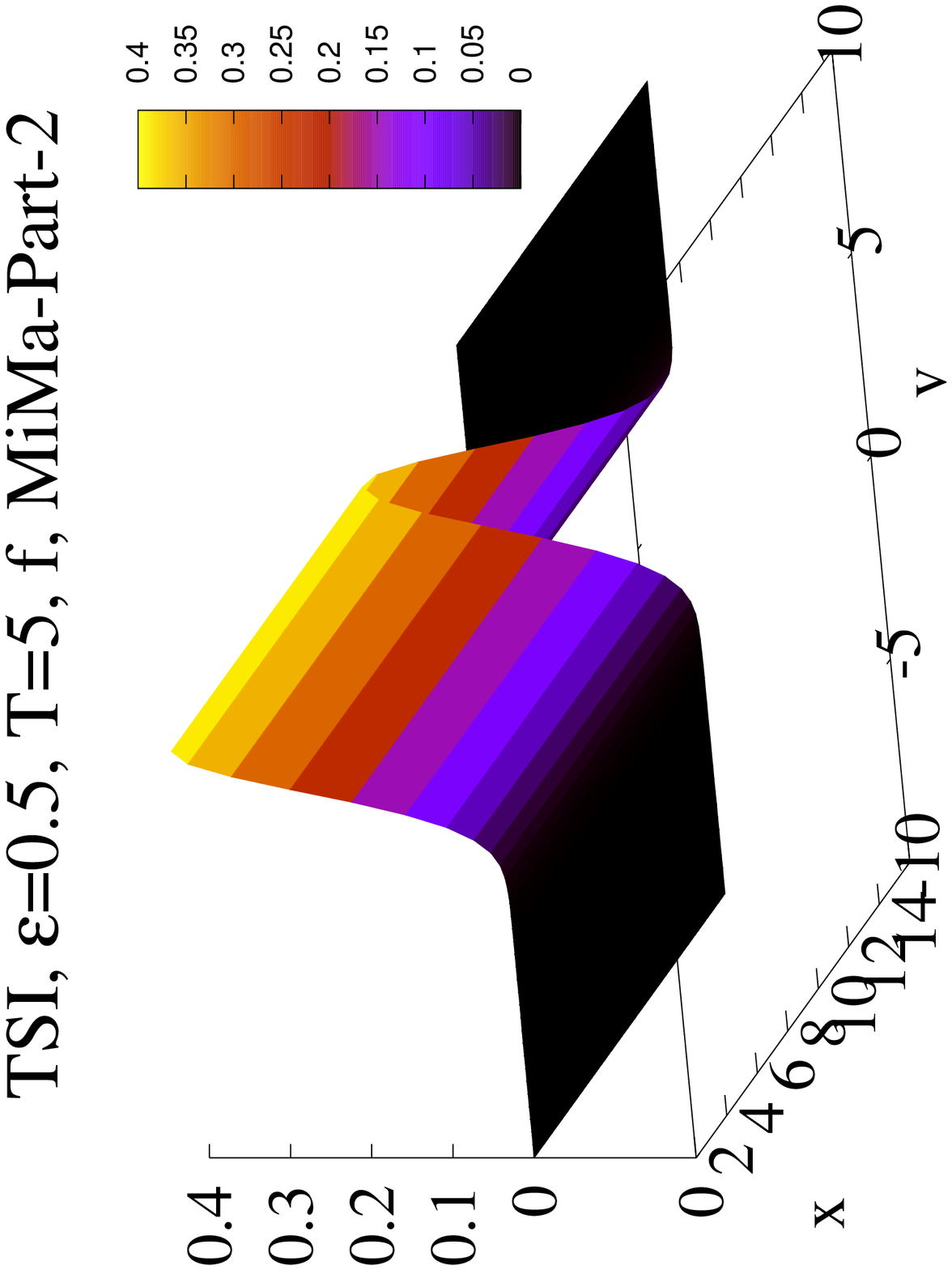}~~\hspace{-1.6cm}~~
\includegraphics[angle=-90,width=0.4\textwidth]{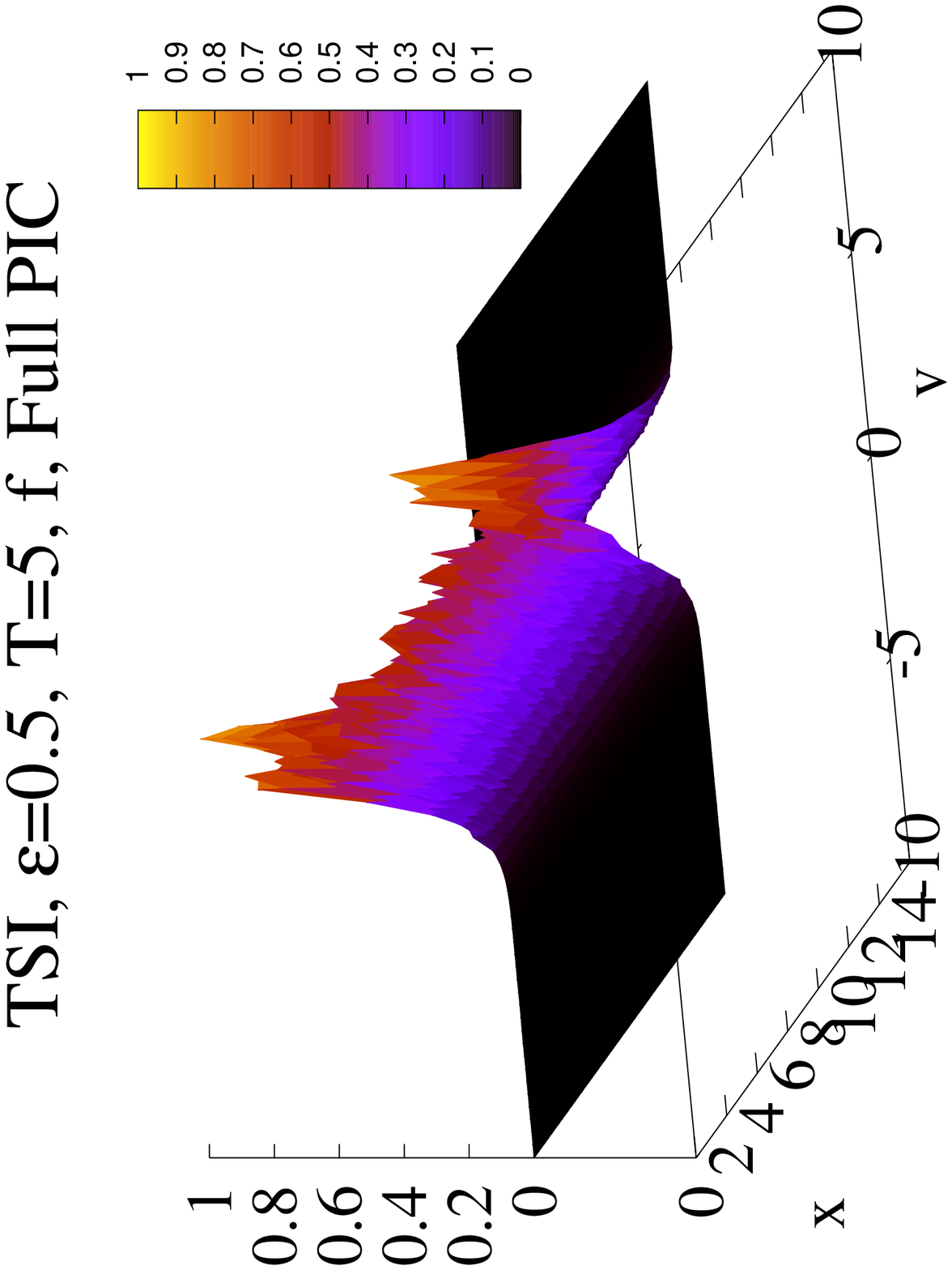}
\caption{Representation of $f(T=5,x,v)$, $\varepsilon=0.5$. Distribution function $f$ reconstructed from MiMa-Grid on the left, from MiMa-Part-2 on the middle and obtained by Full PIC on the right. $\Delta t=0.01$, $N_x=256$ and $N_p=10^5$ for both particle methods. $N_x=N_v=512$ and $\Delta t\approx 6\times 10^{-4}$ for MiMa-Grid.}
\label{fig3Deps05} 
\end{figure}

For smaller values of $\varepsilon$, we compare the four AP schemes (MiMa-Part-1, MiMa-Part-2, MiMa-Grid and Moment G.) to the limit scheme. Results for $\varepsilon=10^{-1}$ are given in Figure \ref{figTSIdif}. Parameters are the following: $\Delta t=10^{-3}$ , $N_x=128$ and $N_p=10^4$ for particle methods, $\Delta t=0.1\Delta x^2\approx 3.5\times 10^{-3}$ and $N_x=N_v=64$ for MiMa-Grid. As in the Landau damping case, MiMa-Part-2 gives the best result comparing to the reference MiMa-Grid. Finally, results for $\varepsilon=10^{-4}$ are given in Figure \ref{figTSIdiflim}, where we have $\Delta t=10^{-2}$, $N_x=128$ and $N_p=100$ for particle methods, $\Delta t=0.1\Delta x^2\approx 3.5\times 10^{-3}$ and $N_x=N_v=64$ for MiMa-Grid. The asymptotic regime is well recovered by all these AP methods.

\begin{minipage}[t]{0.48\textwidth}
\begin{figure} [H]
\includegraphics[angle=-90,width=\textwidth]{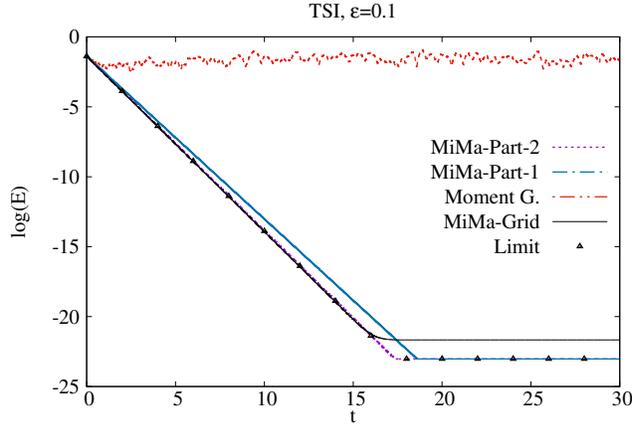}~~~~
\caption{Time history of the electric energy, $\varepsilon=0.1$. $\Delta t=10^{-3}$, $N_x=128$ and $N_p=10^4$ for the four particle methods.}
\label{figTSIdif} 
\end{figure}
\end{minipage}
~~
\begin{minipage}[t]{0.48\textwidth}
\begin{figure} [H]
\includegraphics[angle=-90,width=\textwidth]{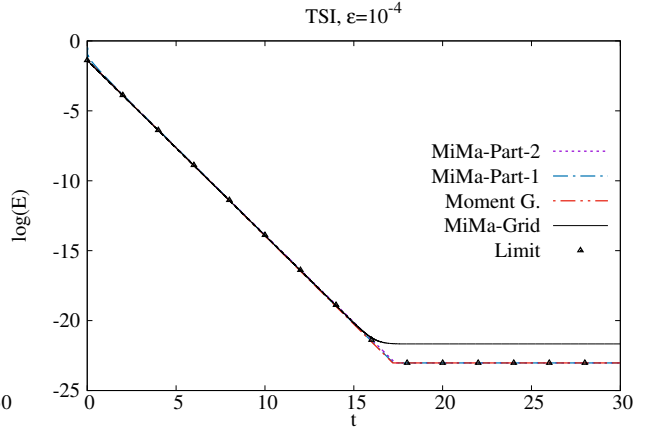}
\caption{Time history of the electric energy, $\varepsilon=10^{-4}$. $\Delta t=0.01$, $N_x=128$ and $N_p=100$ for the four particle methods.}
\label{figTSIdiflim} 
\end{figure}
\end{minipage}

\section{Conclusion}\label{sec:ccl}
\setcounter{equation}{0}

In this paper, we have presented new micro-macro models for the kinetic radiative transport equation (RTE), as well as for the Vlasov-Poisson-BGK system, in the diffusion scaling with periodic boundary conditions. First-order in time and second-order in time models are derived, and their Lagrangian discretizations are detailed. The obtained schemes are proved to degenerate into implicit discretizations of the limit model (the diffusion equation in the RTE case and the drift-diffusion equation in the Vlasov-Poisson-BGK case) when $\varepsilon\to 0$. This asymptotic property is shown in the numerical results too.

Moreover, thanks to the use of particle methods for the microscopic equation, the numerical cost is reduced when $\varepsilon$ diminishes. Finally, compared to a standard PIC method (where $f$ is represented by particles, and not $g$), the numerical noise is reduced. 

In future works, we would like to extend the Monte-Carlo approach proposed for the hydrodynamic limit of Vlasov-BGK in \cite{cdl} to the diffusion and the drift-diffusion limits.

\appendix

\section{Time discretization for Eulerian schemes }\label{subsec:eulerian}
\setcounter{equation}{0}

We present the time discretization of (\ref{eq:micromacro_initial}) having in spirit a Eulerian 
discretization of the phase space.  Obviously, the numerical scheme proposed 
in \cite{bennoune, cl, lm} works well. Now, (\ref{eq:mm_micro}) also provides a numerical scheme that we will exploit in this appendix.

Let us consider staggered grids in the phase-space domain and adopt the following notations: 
$\mbox{x}_i=i\Delta x$ and $\mbox{x}_{i+1/2}=i\Delta x+\Delta x/2$, $i\in\mathbb{N}$, define two grids in space and $\mbox{v}_j=j\Delta v$, $j\in\mathbb{N}$, defines a grid in velocity, where $\Delta x$ (resp. $\Delta v$) is the step in space (resp. in velocity). Time is also discretized with a time step $\Delta t$ and we note $t^n=n\Delta t$, $n\in\mathbb{N}$.
The density $\rho$ is discretized on the first space grid: $\rho_i^n$ approximates $\rho(t^n,\mbox{x}_i)$, whereas the perturbation $g$ is discretized on the second one: $g_{i+1/2,j}^n$ approximates $g(t^n,\mbox{x}_{i+1/2},\mbox{v}_j)$.

Let an approximation $D$ of the spatial derivative, the numerical scheme we propose consists in computing $g_{i+1/2,j}^{n+1}$ with
\begin{eqnarray}
g^{n+1}_{i+1/2, j} &=& e^{-\Delta t/\varepsilon^2} g^{n+1}_{i+1/2, j} - \varepsilon(1-e^{-\Delta t/\varepsilon^2}) \left[  v_j \frac{\rho^n_{i+1}-\rho_i^n}{\Delta x}  \right. \nonumber\\
\label{s2}
&& \left. + (I-\Pi)\left(v^+_j (D^-_x g^n)_{i+1/2, j}+v^-_j (D^+_x g^n)_{i+1/2, j}   \right)   \right], \nonumber\\
\end{eqnarray}
where $(\Pi h)_{i+1/2, j} = (\sum_j h_{i+1/2, j}\Delta v)$,
and then to compute $\rho_i^{n+1}$ with
\be
\label{macrod}
\rho_i^{n+1} = \rho_i^n -\frac{\Delta t}{\varepsilon} \sum_j \left(v_j  \frac{g^{n+1}_{i+1/2, j}-g^{n+1}_{i-1/2, j}}{\Delta x} \right)\Delta v.
\ee

\begin{prop}
The scheme given by (\ref{s2})-(\ref{macrod}) enjoys the AP property, \textit{i.e.} it satisfies the following properties 
\begin{itemize}
\item for fixed $\varepsilon>0$, the scheme is a first-order (in time) approximation of the original model (\ref{eq:etrbgk}), 
\item for fixed $\Delta t>0$, the scheme degenerates into an explicit first-order (in time) scheme of \eqref{eq:diff}. 
\end{itemize}
\label{prop:euler_1st_exp}
\end{prop}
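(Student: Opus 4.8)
The plan is to follow the two-part structure of the proof of Proposition~\ref{prop:lag_1st_exp}, since the scheme (\ref{s2})--(\ref{macrod}) is nothing but the Eulerian (upwind finite-difference) counterpart of the Lagrangian update (\ref{weightd})--(\ref{macrod0}), both descending from the \emph{same} non-stiff reformulation (\ref{eq:mm_macro})--(\ref{eq:mm_micro}). Accordingly I would treat the consistency (fixed $\varepsilon>0$) and the asymptotic behaviour (fixed $\Delta t>0$) separately, reusing for the first the consistency of the reformulation already established in Subsection~\ref{sec:1storder}.

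For the consistency, I would first observe that (\ref{s2}) is exactly a forward-Euler step of the reformulated micro equation (\ref{eq:mm_micro}), in which the operator $v\partial_x g-\partial_x\langle vg\rangle=(I-\Pi)(v\partial_x g)$ is discretized by the upwind fluxes $v_j^+(D_x^- g^n)+v_j^-(D_x^+ g^n)$ and the drift $v\partial_x\rho$ by the one-sided difference $v_j(\rho_{i+1}^n-\rho_i^n)/\Delta x$. Since the reformulation (\ref{eq:mm_micro}) was shown to be consistent with the original micro equation (\ref{micro}) up to $\mathcal{O}(\Delta t^2)$ for fixed $\varepsilon$, and since forward Euler together with the chosen stencils is first-order accurate, the composite scheme (\ref{s2})--(\ref{macrod}) is a first-order (in time) approximation of (\ref{eq:micromacro_initial}), hence of (\ref{eq:etrbgk}). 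This part is essentially a bookkeeping of standard truncation-error estimates.

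For the asymptotic behaviour, I would send $\varepsilon\to 0$ at fixed $\Delta t$ and exploit that $e^{-\Delta t/\varepsilon^2}$ decays faster than any power of $\varepsilon$ while $\varepsilon(1-e^{-\Delta t/\varepsilon^2})=\mathcal{O}(\varepsilon)$. A bootstrapping argument on (\ref{s2}) then yields $g^n=\mathcal{O}(\varepsilon)$ for all $n\geq 1$: the term $e^{-\Delta t/\varepsilon^2}g^n$ is exponentially small, so that $g^{n+1}_{i+1/2,j}=-\varepsilon\,v_j\,(\rho_{i+1}^n-\rho_i^n)/\Delta x+\mathcal{O}(\varepsilon^2)$, the $(I-\Pi)$ contribution being $\mathcal{O}(\varepsilon^2)$ once $g^n=\mathcal{O}(\varepsilon)$. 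Taking the discrete $v$-moment and using the quadrature identity $\langle v^2\rangle=1/3$ gives $\langle vg^{n+1}\rangle_{i+1/2}=-\tfrac{\varepsilon}{3}(\rho_{i+1}^n-\rho_i^n)/\Delta x+\mathcal{O}(\varepsilon^2)$. Substituting this flux into (\ref{macrod}) makes the $1/\varepsilon$ prefactor cancel and collapses the update to $\rho_i^{n+1}=\rho_i^n+\tfrac{\Delta t}{3}(\rho_{i+1}^n-2\rho_i^n+\rho_{i-1}^n)/\Delta x^2+\mathcal{O}(\varepsilon)$, i.e.\ the explicit centered discretization of the diffusion equation (\ref{eq:diff}).

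I expect the genuine content, beyond routine truncation estimates, to lie in the asymptotic step: making the claim $g^n=\mathcal{O}(\varepsilon)$ rigorous (one-step damping by $e^{-\Delta t/\varepsilon^2}$ followed by induction in $n$), and checking that neither the upwind fluxes nor the projection $(I-\Pi)$ pollute the leading $\mathcal{O}(\varepsilon)$ behaviour of $g^{n+1}$, so that the $1/\varepsilon$ factor in (\ref{macrod}) is exactly compensated and the correct diffusion coefficient $1/3$ emerges at the discrete level. The remainder is parallel to the Lagrangian case of Proposition~\ref{prop:lag_1st_exp}.
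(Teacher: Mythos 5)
Your proposal is correct and follows essentially the same route as the paper's proof: expand (\ref{s2}) for $\varepsilon\to 0$ to get $g^{n+1}_{i+1/2,j}=-\varepsilon v_j(\rho^n_{i+1}-\rho^n_i)/\Delta x+\mathcal{O}(\varepsilon^2)$, inject this flux into (\ref{macrod}) so the $1/\varepsilon$ cancels, and identify the coefficient $\sum_j v_j^2\Delta v\approx\langle v^2\rangle=1/3$ to recover an explicit discretization of \eqref{eq:diff}, with consistency for fixed $\varepsilon$ treated as standard. Your added details (the induction giving $g^n=\mathcal{O}(\varepsilon)$ and the check that the $(I-\Pi)$ upwind term is $\mathcal{O}(\varepsilon^2)$) merely make explicit what the paper passes over with ``we observe easily.''
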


\begin{proof}
We observe easily that when $\varepsilon$ goes to zero, (\ref{s2}) gives
$$
g^{n+1}_{i+1/2, j} = -\varepsilon  v_j \frac{\rho^n_{i+1}-\rho_i^n}{\Delta x}  +{\cal O}(\varepsilon^2), 
$$ 
which, injected in the time discretization (\ref{macrod}) for $\rho$, 
gives up to terms of order ${\cal O}(\varepsilon^2)$
$$
\rho_i^{n+1} = \rho_i^n + \Delta t \left(\sum_j v^2_j \Delta v\right) \frac{\rho^n_{i+1}-2\rho_i^n+\rho^n_{i-1}}{\Delta x^2} . 
$$
Since $\sum_j v^2_j \Delta v$ is an approximation of $\int_{-1}^1 v^2 \dd v=1/3$, we obtain a consistent 
discretization of the diffusion equation. 
\end{proof}

Proposition \ref{prop:euler_1st_exp} is of big interest for impliciting the diffusion term $\partial_{xx}\rho$. Indeed, let us rewrite \eqref{s2} as follows 
$$
g_{i+1/2,j}^{n+1} = - \varepsilon(1-e^{-\Delta t/\varepsilon^2})   v_j \frac{\rho^n_{i+1}-\rho_i^n}{\Delta x}  + h_{i+1/2,j}, 
$$
with $h_{i+1/2,j}=e^{-\Delta t/\varepsilon^2} g^{n+1}_{i+1/2, j} - \varepsilon(1-e^{-\Delta t/\varepsilon^2}) \left[ (I-\Pi)\left(v^+_j (D^-_x g^n)_{i+1/2, j}+v^-_j (D^+_x g^n)_{i+1/2, j}   \right)   \right].$
Injecting this relation into the macro part, we get 
\begin{eqnarray}
\rho^{n+1}_i &=& \rho_i^n + \Delta t (1-e^{-\Delta t/\varepsilon^2})\sum_j ( v_j^2)\Delta v  \frac{\rho^n_{i+1}-2\rho_i^n + \rho^n_{i-1}}{\Delta x^2} - \frac{\Delta t}{\varepsilon} \sum_j \left( v_j \frac{h_{i+1/2, j}-h_{i-1/2, j}}{\Delta x}\right) \Delta v. ~~\nonumber\\
\label{eq:euler_1st_manoeuvre}
\end{eqnarray}
Since $h_{i+1/2, j}= {\cal O}(\varepsilon^2)$ as $\varepsilon$ goes to zero after two iterations, the asymptotic preserving property is ensured. Moreover, the diffusion term can now be chosen as implicit, so that the macro equation becomes
\begin{eqnarray}
\rho^{n+1}_i &=& \rho_i^n + \Delta t (1-e^{-\Delta t/\varepsilon^2}) \sum_j ( v_j^2)\Delta v \frac{\rho^{n+1}_{i+1}-2\rho_i^{n+1} + \rho^{n+1}_{i-1}}{\Delta x^2}- \frac{\Delta t}{\varepsilon} \sum_j \left( v_j \frac{h_{i+1/2, j}-h_{i-1/2, j}}{\Delta x}\right) \Delta v,~~\nonumber\\
\label{macrod_cn}
\end{eqnarray}
and the scheme is now free from the usual diffusion condition on the time step.

The algorithm finally writes 

\begin{algo}~~ 
\begin{itemize}
\item Initialize $g_{i+1/2,j}^0$ and $\rho_i^0$.

At each time step:
\item Advance micro part with (\ref{s2}).
\item Advance macro part with (\ref{macrod_cn}).
\end{itemize}
\label{algo:euler_1st}
\end{algo}  

And we have the following result.
\begin{prop}
The scheme given by (\ref{s2})-(\ref{macrod_cn}) enjoys the AP property, \textit{i.e.} it satisfies the following properties 
\begin{itemize}
\item for fixed $\varepsilon>0$, the scheme is a first-order (in time) approximation of the original model (\ref{eq:etrbgk}), 
\item for fixed $\Delta t>0$, the scheme degenerates into an implicit first-order (in time) scheme of \eqref{eq:diff}. 
\end{itemize}
\label{prop:euler_1st_imp}
\end{prop}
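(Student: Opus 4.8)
The plan is to treat Proposition~\ref{prop:euler_1st_imp} as a minor variant of Proposition~\ref{prop:euler_1st_exp}, since the only difference between the two schemes is that the diffusion term in the macro update is now taken implicitly. I would begin by recording the algebraic identity already used to pass from (\ref{macrod}) to (\ref{eq:euler_1st_manoeuvre}): reconstituting $g^{n+1}_{i+1/2,j}$ from its definition turns (\ref{eq:euler_1st_manoeuvre}) back into the pair (\ref{s2})--(\ref{macrod}), the explicitly written diffusion term being cancelled by the corresponding contribution hidden in the stiff flux. Hence (\ref{eq:euler_1st_manoeuvre}) is not a new scheme but a mere rewriting of the one analysed in Proposition~\ref{prop:euler_1st_exp}, and it inherits its first-order consistency with (\ref{eq:etrbgk}) for every fixed $\varepsilon>0$.

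For the consistency claim it then suffices to estimate the difference between (\ref{macrod_cn}) and (\ref{eq:euler_1st_manoeuvre}). These two updates coincide except that the diffusion term is evaluated at $\rho^{n+1}$ rather than at $\rho^n$, so their difference equals $\Delta t\,(1-e^{-\Delta t/\varepsilon^2})\,\frac{1}{3}\,\delta_x^2(\rho^{n+1}_i-\rho^n_i)/\Delta x^2$, where $\delta_x^2$ denotes the centred second difference. For a fixed $\varepsilon>0$ one has $1-e^{-\Delta t/\varepsilon^2}=\mathcal{O}(\Delta t)$ and, for a smooth solution, $\rho^{n+1}-\rho^n=\mathcal{O}(\Delta t)$; the whole correction is therefore $\mathcal{O}(\Delta t^3)$ at the level of the update, i.e.\ $\mathcal{O}(\Delta t^2)$ at the level of the truncation error. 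Being strictly subdominant with respect to the $\mathcal{O}(\Delta t)$ error of the base scheme, the implicit treatment cannot alter first-order accuracy, which proves the first item. Note that this argument relies on the sharp bound $1-e^{-\Delta t/\varepsilon^2}=\mathcal{O}(\Delta t)$; the crude bound $1-e^{-\Delta t/\varepsilon^2}\leq 1$ would not be enough.

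For the asymptotic claim I would reuse the expansion established in the proof of Proposition~\ref{prop:euler_1st_exp}, namely $g^{n+1}_{i+1/2,j}=-\varepsilon v_j(\rho^n_{i+1}-\rho^n_i)/\Delta x+\mathcal{O}(\varepsilon^2)$, which in particular gives $g^n=\mathcal{O}(\varepsilon)$ once one step of relaxation has occurred. Inserting this into the definition of $h_{i+1/2,j}$, the first term is exponentially small because of the factor $e^{-\Delta t/\varepsilon^2}$, while the second is $\varepsilon(1-e^{-\Delta t/\varepsilon^2})(I-\Pi)(\mathcal{O}(g^n))=\mathcal{O}(\varepsilon^2)$, so that $h_{i+1/2,j}=\mathcal{O}(\varepsilon^2)$ and the stiff flux $(\Delta t/\varepsilon)\sum_j v_j(h_{i+1/2,j}-h_{i-1/2,j})\Delta v/\Delta x$ tends to zero. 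At the same time $1-e^{-\Delta t/\varepsilon^2}\to 1$ and $\sum_j v_j^2\Delta v\to 1/3$ as in Proposition~\ref{prop:euler_1st_exp}, so (\ref{macrod_cn}) degenerates into $\rho^{n+1}_i=\rho^n_i+\frac{\Delta t}{3}\,\delta_x^2\rho^{n+1}_i/\Delta x^2$, the backward-Euler (implicit) first-order discretization of (\ref{eq:diff}).

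The point requiring the most care, and the only genuine obstacle, is the bookkeeping of the powers of $\varepsilon$ in $h_{i+1/2,j}$ during the initial step. For $n=0$ the datum $g^0$ need not be $\mathcal{O}(\varepsilon)$, so the $(I-\Pi)$ contribution in $h$ is only $\mathcal{O}(\varepsilon)$ and the stiff flux is $\mathcal{O}(1)$ instead of vanishing; the degeneracy to the implicit diffusion scheme therefore holds only once $g$ has relaxed, that is for $n\geq 1$. I would state this transient explicitly — it is exactly the ``after two iterations'' restriction mentioned before (\ref{macrod_cn}) — and observe that it affects neither the limiting scheme for the subsequent time steps nor the first-order accuracy at fixed $\varepsilon$.
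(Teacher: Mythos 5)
Your proposal is correct and takes essentially the same route as the paper, whose justification of this proposition is contained in the discussion preceding \eqref{macrod_cn}: the rewriting of \eqref{s2} as a leading diffusive flux plus a remainder $h_{i+1/2,j}$, the estimate $h_{i+1/2,j}={\cal O}(\varepsilon^2)$ as $\varepsilon\to 0$ after the initial relaxation steps (the paper's ``after two iterations'', your $n\geq 1$ transient), and the resulting degeneracy of \eqref{macrod_cn} into an implicit first-order discretization of \eqref{eq:diff}, while the fixed-$\varepsilon$ consistency is treated as standard. Your only inaccuracy is the side remark that the crude bound $1-e^{-\Delta t/\varepsilon^2}\leq 1$ would not suffice for the consistency part: it would still give an ${\cal O}(\Delta t^2)$ per-step correction from the implicit treatment, hence global first-order accuracy, so the sharp bound is not actually needed --- but this does not affect the validity of your argument.
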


We do not present here the extension to the Vlasov-Poisson-BGK case, but it is straightforward.

\section{Moment guided}\label{app:mgm}
\setcounter{equation}{0}

In this section, we present the adaptation of the moment guided particle method proposed in \cite{ddp} to our context. For the sake of simplicity, we present it in the RTE case but note that these computations can also be extended to the Vlasov-Poisson-BGK case, without difficulty.

The kinetic equation on $f$ has to be reformulated to avoid the singularity linked to the transport term. 
To do that, we proceed as previously, but from (\ref{eq:etrbgk}).
Indeed, we rewrite equation (\ref{eq:etrbgk}) as 
$$
\partial_t (e^{t/\varepsilon^2}f) =   \frac{e^{t/\varepsilon^2}}{\varepsilon}\left[-v\partial_x f  + \frac{1}{\varepsilon}\rho\right], 
$$
and we integrate between $t^n$ and $t^{n+1}$ to get 
$$
f(t^{n+1}) = e^{-\Delta t/\varepsilon^2}f(t^n) - \frac{e^{-t^{n+1}/\varepsilon^2}}{\varepsilon} \int_{t^n}^{t^{n+1}} e^{t/\varepsilon^2} \left[v\partial_x f - \frac{1}{\varepsilon}\rho\right] \dd t.
$$
We make the following approximation 
$$
f^{n+1} = e^{-\Delta t/\varepsilon^2}f^n - \varepsilon (1-e^{-\Delta t/\varepsilon^2}) \left[v\partial_x f^n - \frac{1}{\varepsilon}\rho^{n}\right],
$$
where $f^n\approx f(t^n)$ and $\rho^{n}\approx \rho(t^{n})$, $\forall n$.

Making appear the discrete time derivative enables to write 
$$
\frac{f^{n+1} -f^n}{\Delta t}= \frac{e^{-\Delta t/\varepsilon^2}-1}{\Delta t}f^n - \varepsilon \frac{1-e^{-\Delta t/\varepsilon^2}}{\Delta t} \left[v\partial_x f^n - \frac{1}{\varepsilon}\rho^{n}\right], 
$$
which we approximate by 
\begin{equation}
\partial_t f= \frac{e^{-\Delta t/\varepsilon^2}-1}{\Delta t}f - \varepsilon \frac{1-e^{-\Delta t/\varepsilon^2}}{\Delta t} \left[v\partial_x f - \frac{1}{\varepsilon}\rho\right]. 
\label{eq:appB}
\end{equation}

Following the spirit of the moment guided method (see \cite{ddp}), this equation is coupled with the macro one, that is
\begin{eqnarray*}
\partial_t \rho+\frac{1}{\varepsilon}\partial_x\langle vf\rangle&=&0,\\
\partial_t f+\varepsilon \frac{1-e^{-\Delta t/\varepsilon^2}}{\Delta t}v\partial_x f&=& \frac{e^{-\Delta t/\varepsilon^2}-1}{\Delta t}f + \frac{1-e^{-\Delta t/\varepsilon^2}}{\Delta t} \rho.
\end{eqnarray*}

To derive an AP scheme for this latter system satisfied by $(\rho,f)$, we adapt the strategy presented in \cite{ddp} to our diffusion framework. To do so, we first remark that $\langle vf\rangle=\langle vg\rangle$ and using the expression of $g$ obtained by (\ref{eq:mm_micro}), we get the following approximation for the macro flux (considered implicit in time)
$$
\frac{1}{\varepsilon}\partial_x\langle vg^{n+1}\rangle =  - (1-e^{-\Delta t/\varepsilon^2})\partial_{xx}\rho^n+\frac{1}{\varepsilon}e^{-\Delta t/\varepsilon^2}\partial_x\langle vg^n\rangle.
$$
Then, we get the following scheme for $\rho$
\begin{equation}
\rho^{n+1}=\rho^n+\Delta t(1-e^{-\Delta t/\varepsilon^2})\partial_{xx}\rho^n-\frac{\Delta t}{\varepsilon}e^{-\Delta t/\varepsilon^2}\partial_x\langle vf^n\rangle.
\label{eq:rhomg}
\end{equation}

A Lagrangian method can be used to approximate the equation on $f$. As for the micro-macro scheme, we use a splitting procedure
\begin{itemize}
\item solve $\partial_t f + \varepsilon \frac{1-e^{-\Delta t/\varepsilon^2}}{\Delta t}  v\partial_x f = 0$ 
\item solve $\partial_t f =  \frac{e^{-\Delta t/\varepsilon^2} -1}{\Delta t}f+ \frac{1-e^{-\Delta t/\varepsilon^2}}{\Delta t} \rho$.    
\end{itemize}
To do that, the transport part is solved with the (non stiff) characteristics 
\be
\label{carxf}
\dot{x}_k(t) =  \varepsilon \frac{1-e^{-\Delta t/\varepsilon^2}}{\Delta t}  v_k(t).
\ee
The source part is solved using the equation satisfied by the weights 
\be
\label{weightf}
\dot{\omega}_k(t) = \frac{e^{-\Delta t/\varepsilon^2} -1}{\Delta t}\omega_k(t) + \frac{1-e^{-\Delta t/\varepsilon^2}}{\Delta t} \rho(t, x_k(t)).
\ee
%

The last step consists in matching the moment of $f^{n+1}$ obtained by the particle method with $\rho^{n+1}$ obtained with (\ref{eq:rhomg}). This can be done using the techniques 
proposed in \cite{ccl}. Indeed, considering the function $g=f-\rho$, its weight can be written as 
$$
\gamma_k = \omega_k - \beta_k, \;\; \mbox{ with } \;\;  \beta_k=\rho(x_k)\frac{L_xL_v}{N_{p}}.
$$  
Then, we apply the discrete version of $(I-\Pi)$ to the weights $\gamma_k$ as in \cite{ccl} 
$$
\omega_k^{new} = \beta_k +(I-\Pi)( \omega_k - \beta_k). 
$$

\paragraph{Acknowledgements}~~\\

N. Crouseilles and M. Lemou are supported by the French ANR project MOONRISE ANR-14-CE23-0007-01 and by the Enabling Research EUROFusion project CfP-WP14-ER-01/IPP-03. A. Crestetto is supported by the French ANR project ACHYLLES  ANR-14-CE25-0001.


\end{document}